
\documentclass[preprint,12pt]{elsarticle}




\usepackage{amssymb}
\usepackage{amsthm}

\newtheorem{theorem}{Theorem}
\newtheorem{lemma}{Lemma}

\newtheorem{corollary}{Corollary}

\newtheorem{remark}{Remark}

\usepackage{amsmath}
\usepackage{caption}
\usepackage{cases}
\usepackage[hidelinks]{hyperref}
\hypersetup{
    colorlinks = true,
    linkcolor = blue,
    citecolor = red,
    urlcolor = cyan,
}
\usepackage{float}
\usepackage{siunitx}
\usepackage{subcaption}

\numberwithin{equation}{section}


\journal{Applied Numerical Mathematics}

\begin{document}

\begin{frontmatter}



\title{Legendre Expansions of Products of Functions with Applications to Nonlinear Partial Differential Equations}


\author[inst1]{Rabia Djellouli}

\affiliation[inst1]{organization={Department of Mathematics and Interdisciplinary Research Institute for the Sciences},
            addressline={California State University}, 
            city={Northridge},
            postcode={91330}, 
            state={CA},
            country={US}}

\author[inst1]{David Klein}
\author[inst2]{Matthew Levy}

\affiliation[inst2]{organization={Department of Mathematics},
            addressline={California State University}, 
            city={Northridge},
            postcode={91330}, 
            state={CA},
            country={US}}

\begin{abstract}
Given the Fourier--Legendre expansions of $f$ and $g$, and mild conditions on $f$ and $g$, we derive the Fourier--Legendre expansion of their product in terms of their corresponding Fourier--Legendre coefficients. We establish upper bounds on rates of convergence. We then employ these expansions to solve semi-analytically a class of nonlinear PDEs with a polynomial nonlinearity of degree 2. The obtained numerical results illustrate the efficiency and performance accuracy of this Fourier--Legendre-based solution methodology for solving a class of nonlinear PDEs.
\end{abstract}



\begin{keyword}
Fourier--Legendre series \sep Legendre polynomial \sep diffusion equation \sep nonlinear PDEs \sep nonlinear ODEs \sep Runge-Kutta methods
\MSC[2020] 42C10 \sep 41A25  \sep 65L06 \sep 65N35  \sep 40-08 
\end{keyword}

\end{frontmatter}

\section{Introduction}
\label{sec:intro}

Fourier-type series have proven to be powerful tools for solving (semi-) analytically a large  class of linear PDEs that arise in a range of applications, including wave propagation, heat diffusion, compressible and incompressible fluid mechanics, free surface flow, flow in porous media, particle flow, strength of materials, elasticity, structural dynamics, transport engineering, and electrical engineering.  Fourier-type series have also been incorporated into a variety of numerical strategies for solving linear PDE formulation problems. For example, they have been used to construct absorbing boundary conditions (exact or approximate) to reformulate exterior electromagnetic or acoustic scattering problems in bounded domains (see, for example, \citep{Turkel} and references therein). 
In addition, they have been extensively employed to generate exact solutions to serve as reference solutions for assessing the performance accuracy of numerical methods designed to solve PDEs (see, for example, \citep{Barucq1}).

Solution methodologies that are based on expanding the solutions of the considered linear PDEs in terms of Fourier-type series including trigonometric, Legendre, Chebyshev, or Bessel series are often involved with the application of spectral methods. This class of methods was first introduced in the early 70s (the first paper was authored by Orszag in 1969 \citep{Orszag}) and emerged since the 90s as an alternative and effective computational tool to the ubiquitous finite element (FEM) and finite difference (FED) methods in scientific computing, as attested by the hundreds of publications in various applications (see, e.g., \citep{Gottlieb, Iliev, Boyd, Peyret, Canuto, Hesthaven, Shen, Hiptmair, Amara2012, Amara2014, Yu2021} and the references therein). These methods have been extensively investigated from both mathematical and numerical viewpoints. Results pertaining to convergence, stability, error estimates, boundary conditions and uniqueness, etc. can be found in the monographs \citep{Gottlieb, Iliev, Boyd, Peyret, Canuto, Hesthaven, Shen}, among others. The orthogonal property of the basis functions is the key feature of spectral methods that significantly reduces their implementation complexity and enhance their computational cost-effectiveness, but more remarkably it makes them outperform in terms of accuracy FEM and FED, particularly for smooth solutions in domains with simple geometries in which spectral methods have exponential convergence.

By contrast, for nonlinear problems, Fourier-type series have so far {\it only} been included in already established solution methodologies. Indeed, as is well-known \citep{DautrayLions}, standard methods for solving nonlinear problems involve two-step approaches, where in the first step, the nonlinear problem under consideration is formulated as a set or sequence of linear problems via linearization processes such as gradient methods, fixed-point, or path following approaches. In the second step, the linear problems from the first step are solved using approximation methods employing various representations such as polynomial bases, spectral expansions, wavelets, or reduced bases \citep{CiarletLions}. There is a second class of methods that  uses approximation prior to the linearization process. The main difference between the two approaches is that this latter class of methods gives rise to a sequence of linear problems in finite dimensional vector spaces whereas the standard approaches involve infinite dimensional spaces (such as Sobolev spaces) prior to the application of the discretization schemes. We are not aware of any work in which Fourier-type expansions are employed as the primary tool for solving the nonlinear PDEs. The main obstacle is that the presence of the nonlinearity prevents a significant gain in terms of computational complexity.
 
In this paper, our focus is on Fourier--Legendre series \citep{Stone, AbramowitzStegun}.  Fourier--Legendre series have found wide applications across scientific disciplines, for example, in the theory of the hypergeometric functions and complete elliptic integrals \citep{Campbell}, in the theory of the hypergeometric functions and fractional operators \cite{series}, for derivations and proofs of convergent series to $1/\pi$ and $1/\pi^2$ \citep{pi}, and in chemical physics \citep {Ragot}. Here, we prove new theorems on products of Fourier--Legendre series and show how they can be exploited to approximate solutions to a class of nonlinear PDEs in which the nonlinearity is a polynomial of degree 2.  The method is generalizable to polynomials of arbitrary degree, and it is particularly well suited to PDEs with diffusion terms, since the diffusion operator is diagonal in the orthogonal basis of Legendre polynomials in $L^2[-1,1]$.

More specifically, we show how the Fourier--Legendre coefficients of a product of two functions, $f$ and $g$, can be expressed in terms of the respective Fourier--Legendre coefficients of $f$ and $g$, and we derive an easily computable partial finite sum.  The key ingredient is a well-known combinatorial formula that expresses the product of two Legendre polynomials as an explicit linear combination of Legendre polynomials \citep{Adams, Bailey, Dougall, Salam}. This is carried out in Section \ref{sec:product}.  Section \ref{sec:conv_rates} establishes upper bounds of rates of convergence for product series approximations, depending on the smoothness of the factors $f$ and of $g$. In Section \ref{sec:application}, we illustrate how our results may be applied to find semi-analytic solutions to a class of nonlinear partial differential equations with diffusion, and quadratic polynomial nonlinearity.  In Section \ref{sec:discussion}, we summarize and assess our results, and identify further applications. Two appendices are also included. \ref{sec:appendix} includes data related to Section \ref{sec:discussion} and \ref{sec:appendixB} analyzes a function that fails to satisfy the hypotheses of Theorem \ref{L2}, but satisfies its conclusion, demonstrating that the hypotheses are sufficient but not necessary.

\section{Product Theorem}
\label{sec:product}

The Legendre polynomials, $\{P_0(x), P_1(x), P_2(x), \dots\}$, constitute a complete orthogonal basis of $ L^2 ([-1,1])$ and may be defined by the Rodrigues formula \citep{AbramowitzStegun},
\begin{equation}
P_n(x)=\frac{1}{2^n n!} \frac{d^n}{dx^n}(x^2-1)^n.
\end{equation}
Thus, $P_0(x)=1, P_1(x)=x, P_2(x)= (3x^2-1)/2$, etc., and they satisfy the orthogonality relationship,
\begin{equation}
\int_{-1}^{1} P_n(x)P_m(x)dx= \frac{2}{2n+1} \delta_{nm},
\end{equation}
 where $\delta_{nm}$ is the Kronecker delta function.

Assume that $f, g, f\cdot g\in L^2 ([-1,1])$ and denote the expansions of $f$ and $g$ in the orthogonal basis $\{P_n(x)\}$ by,
\begin{equation}    \label{expansion}
f(x)=\sum_{n=0}^{\infty}\alpha_n P_n(x)\qquad g(x)=\sum_{m=0}^{\infty}\beta_m P_m(x)
\end{equation}
where, 
\begin{equation}
\alpha_n=\frac{2n+1}{2}\int_{-1}^{1}f(x)P_n(x) dx, \qquad \beta_m=\frac{2m+1}{2}\int_{-1}^{1}g(x)P_m(x) dx.
\end{equation}
Since the function $f\cdot g$ is in $L^2([-1,1])$, it also has a Fourier--Legendre expansion,

\begin{equation}    \label{fg}
(f\cdot g)(x) = \sum_{k=0}^{\infty} \mu_k P_k(x).
\end{equation} 
Assuming pointwise and absolute convergence of Eqs \eqref{expansion}, we show in this section how to express each coefficient $\mu_k$ as a function of the coefficients $\{\alpha_n\}_{n\in\{0,1,2,\dots\}}$ and $\left\{ \beta_m \right\}_{m\in\{0,1,2,\dots\}}$ in the form of an infinite series, by first finding finite sum approximations.

Unless otherwise indicated, we assume throughout that the series in Eqs.\eqref{expansion} converge in the sense of $L^2 ([-1,1])$ and with pointwise absolute convergence. This is the case, for example, when $f$ and $g$ are absolutely continuous on $[-1,1]$ and $f^\prime$ and $g^\prime$ are of bounded variation, conditions that hold for the case that $f$ and $g$ are  continuously differentiable \citep{Saxena}.  Under these assumptions, we can express the product of the two series in Eq.\eqref{expansion} as a sum of products of Legendre polynomials as follows, 

\begin{equation}\label{product1}
(f\cdot g)(x)
=
\left(
\sum_{n=0}^{\infty} \alpha_n P_n(x)
\right)
\left(
\sum_{m=0}^{\infty}\beta_n P_m(x)
\right)
= \sum_{n=0}^{\infty}
\sum_{\ell=0}^{n}\alpha_{n-\ell}\beta_{\ell}P_{n-\ell}(x)P_{\ell}(x).
\end{equation}
Let $N$ be a positive integer. We define the following two convenient sequences: 
\begin{equation}\label{alphabeta}
\alpha_n^N=
\begin{cases}
\alpha_n &\text{if}\,\, n\leqslant N\\
 0&\text{otherwise} 
\end{cases}\qquad\qquad
\beta_n^N=
\begin{cases}
\beta_n &\text{if}\,\, n\leqslant N\\
 0&\text{otherwise}. 
\end{cases}
\end{equation}
Substituting $\alpha_n^N$ and $\beta_n^N$ for $\alpha_n$ and $\beta_n$ respectively in Eq.\eqref{product1} gives,
\begin{align}\label{product2}
\left(
\sum_{n=0}^{N} \alpha_n P_n(x)
\right)
\left(
\sum_{m=0}^{N}\beta_n P_m(x)
\right)
&=\left(
\sum_{n=0}^{\infty} \alpha_n^N P_n(x)
\right)
\left(
\sum_{m=0}^{\infty}\beta_n ^N P_m(x)
\right)\\
&= \sum_{n=0}^{\infty}
\sum_{\ell=0}^{n}\alpha_{n-\ell}^N\beta_{\ell}^N P_{n-\ell}(x)P_{\ell}(x)\label{zeroterms}\\
&= \sum_{n=0}^{2N}
\sum_{\ell=0}^{n}\alpha_{n-\ell}^N\beta_{\ell}^NP_{n-\ell}(x)P_{\ell}(x),
\end{align}
where the last step follows because all terms in Eq.\eqref{zeroterms} with $n>2N$ are zero.
 
In addition, assuming that both series in Eqs.\eqref{expansion} converge uniformly and that $f$ or $g$ is bounded on $[-1,1]$ we have,
\begin{align}
\label{product3}
(f\cdot g)(x)
&=\lim_{N\to\infty}\left[\left(
\sum_{n=0}^{N} \alpha_n P_n(x)
\right)
\left(
\sum_{m=0}^{N}\beta_n P_m(x)
\right)\right]\\
&=\lim_{N\to\infty} \sum_{n=0}^{2N}
\sum_{\ell=0}^{n}\alpha_{n-\ell}^N\beta_{\ell}^NP_{n-\ell}(x)P_{\ell}(x)
\end{align}
and the convergence is uniform.

To proceed further, we use the fact that the product of any two Legendre polynomials can be expressed as a linear combination of Legendre polynomials according to this formula (see for example \citep{Salam}),
\begin{equation}
\begin{split}
P_m(x)P_n(x)&=\\
\sum_{j=0}^{\min(m,n)}&\frac
{\left(\frac{1}{2}\right)_j\left(\frac{1}{2}\right)_{m-j}\left(\frac{1}{2}\right)_{n-j}(m+n-j)!}
{j!(m-j)!(n-j)!\left(\frac{3}{2}\right)_{m+n-j}}
\left(2(m+n-2j)+1\right)P_{m+n-2j}(x),
\end{split}
\end{equation}
where $(a)_r$ is the rising factorial function given by,
\begin{equation*}
(a)_0=1, \qquad (a)_r=a(a+1)(a+2)\dots(a+(r-1)), \qquad r=1,2,3,\dots
\end{equation*}
Thus,
\begin{align}   \label{begin-products}
&\sum_{\ell=0}^{k}\alpha_{k-\ell}\beta_{\ell}P_{k-\ell}(x)P_{\ell}(x) =\sum_{\ell=0}^{k}\alpha_{k-\ell}\beta_{\ell}\,\,\times\nonumber\\
&\left[
\sum_{j=0}^{\min(k-\ell,\ell)}
\frac
{
	\left(\frac{1}{2}\right)_j
	\left(\frac{1}{2}\right)_{k-\ell-j}
	\left(\frac{1}{2}\right)_{\ell-j}
	(k-j)!
}
{
	j!(k-\ell-j)!
	(\ell-j)!
	\left(\frac{3}{2}\right)_{k-j}
}
\left(2(k-2j)+1)\right)
P_{k-2j}(x)
\right].
\end{align}
For simplicity of notation, define $A_{jk\ell}$ by,
\begin{equation}    \label{ajkl}
A_{jk\ell} =
\frac{
	\left( \frac{1}{2} \right)_j
	\left( \frac{1}{2} \right)_{k-\ell-j} 
	\left( \frac{1}{2} \right)_{\ell-j} (k-j)!
}{
	j! (k-\ell-j)!(\ell-j)!
	\left( \frac{3}{2} \right)_{k-j}
}
\left( 2(k-2j)+1 \right),
\end{equation}
so that,
\begin{equation}\label{productsum}
\sum_{\ell=0}^{k}\alpha_{k-\ell}\beta_{\ell}P_{k-\ell}(x)P_{\ell}(x)=\sum_{\ell=0}^{k} \sum_{j=0}^{\min(k-\ell,\ell)}
	\alpha_{k-\ell}\beta_{\ell}A_{jk\ell} P_{k-2j}(x),
\end{equation}
with an analogous equation holding when $\alpha_n^N$ and $\beta_n^N$ are substituted for $\alpha_n$ and $\beta_n$ respectively.
 
\begin{remark} \label{Abound2}
$A_{jk\ell}$ is undefined unless $j\leq \min(k-\ell,\ell)$ because of the terms $(k-\ell-j)!$ and $(\ell-j)!$ in the denominator of Eq.\eqref{ajkl}. We note, however, that since the reciprocal of the Gamma function, $1/\Gamma (z)$, is entire, $A_{jk\ell}$ may be understood to take the value $0$ when these inequalities are violated.
\end{remark}

\begin{lemma} \label{Abound}
 For any integers $j,k,\ell$ satisfying  $0\leq j\leq\min(k-\ell,\ell)\leq\ell\leq k$, 
 $$0<A_{jk\ell}\leq 1,$$ 
 where $A_{jk\ell}$ is given by Eq.\eqref{ajkl}.
\end{lemma}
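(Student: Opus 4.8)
The plan is to exploit the fact that $A_{jk\ell}$ is, by its very definition in Eq.\eqref{ajkl}, the coefficient of $P_{k-2j}(x)$ in the linearization of the product $P_{k-\ell}(x)P_{\ell}(x)$. Setting $m=k-\ell$ and $n=\ell$ in the product formula for $P_m(x)P_n(x)$ used to obtain Eq.\eqref{begin-products} (so that $m+n=k$, $m+n-j=k-j$, and $m+n-2j=k-2j$, and the summation range $0\le j\le\min(m,n)$ becomes $0\le j\le\min(k-\ell,\ell)$), one gets the identity
\begin{equation*}
P_{k-\ell}(x)P_{\ell}(x) = \sum_{j=0}^{\min(k-\ell,\ell)} A_{jk\ell}\, P_{k-2j}(x).
\end{equation*}
Two observations then finish the argument: (i) every $A_{jk\ell}$ is strictly positive under the stated hypotheses, and (ii) the coefficients $A_{jk\ell}$ sum to $1$.

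For (i), I would simply inspect the signs of the factors in Eq.\eqref{ajkl}. The hypothesis $0\le j\le\min(k-\ell,\ell)\le\ell\le k$ guarantees that $j$, $k-\ell-j$, $\ell-j$, and $k-j$ are all nonnegative integers, so the factorials and the rising factorials $\left(\tfrac12\right)_j$, $\left(\tfrac12\right)_{k-\ell-j}$, $\left(\tfrac12\right)_{\ell-j}$, $(k-j)!$, $\left(\tfrac32\right)_{k-j}$ appearing in Eq.\eqref{ajkl} are all strictly positive; moreover $2j\le (k-\ell)+\ell=k$ forces $2(k-2j)+1\ge 1>0$. Hence $A_{jk\ell}>0$, which is the lower bound.

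For (ii), I would evaluate the displayed linearization identity at $x=1$. Using the standard normalization $P_n(1)=1$ for every $n\ge 0$ \cite{AbramowitzStegun}, the left-hand side becomes $1$ and the right-hand side becomes $\sum_{j=0}^{\min(k-\ell,\ell)} A_{jk\ell}$, so
\begin{equation*}
\sum_{j=0}^{\min(k-\ell,\ell)} A_{jk\ell} = 1.
\end{equation*}
Since every summand is positive by (i), each one is at most the value of the entire sum, i.e. $A_{jk\ell}\le 1$ (with equality precisely when $\min(k-\ell,\ell)=0$, i.e. when the sum consists of a single term). Combining this with (i) yields $0<A_{jk\ell}\le 1$.

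I do not expect a serious obstacle here; the only thing to be careful about is the index bookkeeping in the substitution $m=k-\ell$, $n=\ell$, so that the summation range and the exponents $k-j$, $k-2j$ line up correctly. A purely computational alternative — rewriting $\left(\tfrac12\right)_r = (2r)!/(4^r r!)$ and $\left(\tfrac32\right)_r = (2r+1)!/(4^r r!)$ in Eq.\eqref{ajkl}, collapsing $A_{jk\ell}$ to an explicit ratio of factorials, and bounding it directly — is available but messier, so I would present the evaluation at $x=1$ instead.
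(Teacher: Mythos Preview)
Your proof is correct and takes essentially the same approach as the paper: both obtain $\sum_{j=0}^{\min(k-\ell,\ell)}A_{jk\ell}=1$ by evaluating the linearization identity at $x=1$ (using $P_n(1)=1$) and then combine this with positivity of each summand to conclude $A_{jk\ell}\le 1$. The only cosmetic difference is that the paper reaches the identity via Eq.\eqref{productsum} with a specific choice of coefficients $\alpha_n,\beta_\ell$, whereas you invoke the product formula $P_{k-\ell}(x)P_\ell(x)=\sum_j A_{jk\ell}P_{k-2j}(x)$ directly---which is arguably cleaner.
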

\begin{proof}
Set $x=1$ so that  $P_n(x)=1$ for all $n=0,1,2\dots$, in Eq.\eqref{productsum}. Now choose any $k_0$ and any $\ell_0 \leq k_0$.  Choose functions $f$ and $g$ so that  
$\beta_{\ell} = 1$ if $\ell = \ell_0$ and $\beta_{\ell} = 0$ otherwise, and so that $\alpha_n = 1$ for $n=0, 1, \dots, k_0$. Then Eq.\eqref{productsum} becomes,
\begin{equation}\label{productsum'}
1=\sum_{j=0}^{\min(k_0-\ell_0,\ell_0)}A_{jk_0\ell_0}. 
\end{equation}
Since each summand in Eq.\eqref{productsum'} is positive, it follows that each is bounded by $1$. 
\end{proof}
 Next, we combine Eqs\eqref{product1}, \eqref{begin-products} and \eqref{productsum} to deduce that,
	\begin{equation}    \label{square-finite}
	(f\cdot g)(x)
	=
	\sum_{k=0}^{\infty} \sum_{\ell=0}^{k} \sum_{j=0}^{\min(k-\ell,\ell)}
	\alpha_{k-\ell}\beta_{\ell}A_{jk\ell} P_{k-2j}(x),
	\end{equation}
and similarly using Eq.\eqref{product3}, we obtain,
\begin{equation}    \label{square-finite2}
	(f\cdot g)(x)
	=
	\lim_{N\to\infty}\sum_{k=0}^{2N} \sum_{\ell=0}^{k} \sum_{j=0}^{\min(k-\ell,\ell)}
	\alpha_{k-\ell}^N\beta_{\ell}^N A_{jk\ell} P_{k-2j}(x).
	\end{equation}
Again, under the assumption that the series in Eq.\eqref{expansion} converge uniformly and that $f$ or $g$ is bounded on $[-1,1]$, the convergence in Eq.\eqref{square-finite} is uniform.

The next lemma enables us to make a change of dummy variables and then interchange the inner two sums in Eqs. \eqref{square-finite} and \eqref{square-finite2}.  
\begin{lemma}\label{chi}
	Let $\chi_A (j)$ denote the indicator function on nonnegative integers which takes the value $1$ if $j\in A$ and $0$ otherwise. For nonnegative integers $j,k,\ell$,
	\[ \chi_{[0\leq\ell\leq k]}(\ell)\cdot\chi_{[0\leq j\leq \min(k-\ell,\ell)]}(j)=
		\chi_{[j\leq\ell\leq k-j]}(\ell)\cdot
	\chi_{[0\leq j\leq \frac{k}{2}]}(j)
	\]
	where the subscript $[0\leq\ell\leq k]$ stands for $\{\ell: 0\leq\ell\leq k\}$ and similarly for the other subscripts.
\end{lemma}
\begin{proof}
	Observe first that $\chi_{[0\leq j\leq \min(k-\ell,\ell)]}(j)= \chi_{[0\leq j\leq\ell]}(j)\cdot
	\chi_{[0\leq j\leq k-\ell]}(j)$.  Thus, it suffices to show that 
\begin{equation}\label{chis}
\chi_{[0\leq\ell\leq k]}(\ell)\cdot
	\chi_{[0\leq j\leq k-\ell]}(j)\cdot
	\chi_{[0\leq j\leq\ell]}(j)
	=
	\chi_{[j\leq\ell\leq k-j]}(\ell)\cdot
	\chi_{[0\leq j\leq \frac{k}{2}]}(j).
\end{equation}
We show that left side of Eq.\eqref{chis} $= 1$ if and only if the right side $= 1$.
	
Assume that the left side equals 1. Then $0\leq j\leq k- \ell$ and $j\leq\ell$. Adding these inequalities, it follows that $0\leq 2j\leq k$ and therefore $0\leq j\leq \frac{k}{2}$.  If the left side equals 1, it also follows that $ j\leq\ell$ and  $0\leq j\leq k-\ell$.  Therefore  $0\leq\ell\leq k-j$.  Thus, the right side of the equation $= 1$.

Next, assume that the right side of Eq.\eqref{chis} equals 1.  Then, $0\leq j\leq \frac{k}{2}$ and $j\leq\ell\leq k-j$.  It follows that $0\leq j\leq\ell$ and $0\leq j\leq k-\ell$ and thus $\chi_{[0\leq\ell\leq k]}\cdot\chi_{[0\leq j\leq k-\ell]}=1$.  Also, since $\ell\leq k-j$, then $\ell\leq k$ and $\chi_{[0\leq j\leq\ell]}=1$ and the right side of Eq.\eqref{chis} equals 1.
	\end{proof}
	
\begin{corollary} \label{cor1} 
Let $f$ and $g$ be given as in Eqs. \eqref{expansion} with pointwise absolute convergence.  Then, for each $x\in[-1,1]$, we have,
	\begin{equation}	\label{reorder-sums}
		(f\cdot g)(x)
		=
		\lim_{N\to\infty}\sum_{k=0}^{N} \sum_{j=0}^{\lfloor\frac{k}{2}\rfloor} \sum_{\ell=j}^{k-j}
		\alpha_{k-\ell} \beta_\ell A_{jk\ell} P_{k-2j}(x),
	\end{equation}
	where $\lfloor\frac{k}{2}\rfloor$ is the greatest integer less than or equal to $k/2$. If both series in Eq.\eqref{expansion} converge uniformly and $f$ or $g$ is bounded on $[-1,1]$, then
	\begin{equation}	\label{reorder-sums2}
		(f\cdot g)(x)
		=
		\lim_{N\to\infty}\sum_{k=0}^{2N} \sum_{j=0}^{\lfloor\frac{k}{2}\rfloor} \sum_{\ell=j}^{k-j}
		\alpha_{k-\ell}^N \beta_\ell^N A_{jk\ell} P_{k-2j}(x),
	\end{equation}
	with uniform convergence.
\end{corollary}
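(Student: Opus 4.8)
The plan is to fix the outer index $k$ and rearrange only the inner double sum over $\ell$ and $j$, which contains finitely many nonzero terms for each fixed $k$; this makes the rearrangement unconditionally valid and leaves the convergence mode of the $k$-series completely untouched.

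First I would start from Eq.\eqref{square-finite} for the first assertion and from Eq.\eqref{square-finite2} for the second, both of which have already been established. For a fixed $k$ (and, in the second case, a fixed $N$), I would rewrite the inner summation limits through indicator functions,
\[
\sum_{\ell=0}^{k}\sum_{j=0}^{\min(k-\ell,\ell)}(\cdots)
=\sum_{\ell\geq 0}\sum_{j\geq 0}\chi_{[0\leq\ell\leq k]}\,\chi_{[0\leq j\leq\min(k-\ell,\ell)]}\,(\cdots),
\]
where $(\cdots)$ abbreviates $\alpha_{k-\ell}\beta_{\ell}A_{jk\ell}P_{k-2j}(x)$, or its truncated counterpart with $\alpha_{k-\ell}^N,\beta_\ell^N$. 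By Remark \ref{Abound2} the summand is defined (and possibly nonzero) precisely when the indicator product equals $1$, so only finitely many terms survive.

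Next I would invoke Lemma \ref{chi} to replace $\chi_{[0\leq\ell\leq k]}\,\chi_{[0\leq j\leq\min(k-\ell,\ell)]}$ by $\chi_{[j\leq\ell\leq k-j]}\,\chi_{[0\leq j\leq k/2]}$. For integer $j$ the constraint $0\le j\le k/2$ is exactly $0\le j\le\lfloor k/2\rfloor$, so the surviving terms are reindexed as $\sum_{j=0}^{\lfloor k/2\rfloor}\sum_{\ell=j}^{k-j}(\cdots)$. Because this is merely a rearrangement of a finite sum, it holds term by term, and hence each partial sum $\sum_{k=0}^{N}(\cdots)$, resp. $\sum_{k=0}^{2N}(\cdots)$, is literally unchanged. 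Substituting back into Eqs.\eqref{square-finite} and \eqref{square-finite2} and passing to the limit $N\to\infty$ yields Eqs.\eqref{reorder-sums} and \eqref{reorder-sums2}; the pointwise, resp. uniform, convergence asserted there is inherited verbatim from Eqs.\eqref{square-finite} and \eqref{square-finite2}, since nothing about the $k$-summation or the limiting process was altered.

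The only point requiring care, rather than a genuine obstacle, is the bookkeeping: one must verify that the indicator identity of Lemma \ref{chi} does translate the index set $\{(\ell,j): 0\le\ell\le k,\ 0\le j\le\min(k-\ell,\ell)\}$ bijectively onto $\{(j,\ell): 0\le j\le\lfloor k/2\rfloor,\ j\le\ell\le k-j\}$ without gaining or losing terms, and that the change of summation order between the two finite inner sums is thereby justified. No convergence subtlety arises, because the entire rearrangement is confined to finitely many terms for each fixed $k$.
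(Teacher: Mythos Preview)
Your argument is correct and is essentially identical to the paper's own proof: both fix $k$, rewrite the inner double sum via indicator functions, apply Lemma~\ref{chi} to obtain the reindexed form $\sum_{j=0}^{\lfloor k/2\rfloor}\sum_{\ell=j}^{k-j}$, and then feed the result back into Eqs.~\eqref{square-finite} and \eqref{square-finite2} so that the convergence modes carry over unchanged. Your explicit remark that the inner rearrangement involves only finitely many terms is a helpful clarification the paper leaves implicit.
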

\begin{proof}
	From Lemma \ref{chi}, we have for any $k\geq 0$,
	\begin{align}
	\sum_{\ell=0}^{k} \sum_{j=0}^{\min(k-\ell,\ell)}
	\alpha_{k-\ell}&\beta_{\ell}A_{jk\ell} P_{k-2j}(x)\nonumber\\
		 =\sum_{\ell=0}^{\infty} \sum_{j=0}^{\infty}
		\chi_{[0\leq\ell\leq k]}&\cdot\chi_{[0\leq j\leq \min(k-\ell,\ell)]} \alpha_{k-\ell}\beta_{\ell} A_{jk\ell}P_{k-2j}(x)\nonumber
		\\
		&=
		\sum_{j=0}^{\infty} \sum_{\ell =0}^{\infty}
		\chi_{[j \leq\ell\leq k-j]}\chi_{[0\leq j\leq\lfloor\frac{k}{2}\rfloor]} \alpha_{k-\ell}\beta_{\ell} A_{jk\ell}P_{k-2j}(x)\nonumber
		\\
		&=
		\sum_{j=0}^{\lfloor\frac{k}{2}\rfloor} \sum_{\ell=j}^{k-j}
		\alpha_{k-\ell} \beta_\ell A_{jk\ell} P_{k-2j}(x).
	\end{align}
Combining with Eq.\eqref{square-finite} gives,
	\begin{align*}
		(f\cdot g)(x)
		=
		\sum_{k=0}^{\infty} \sum_{j=0}^{\lfloor\frac{k}{2}\rfloor} \sum_{\ell=j}^{k-j}
		\alpha_{k-\ell} \beta_\ell A_{jk\ell} P_{k-2j}(x)
	\end{align*}
with pointwise convergence.  Uniform convergence of Eq.\eqref{reorder-sums2} follows from the uniform convergence of Eq.\eqref{square-finite2}.
\end{proof}

\begin{theorem} \label{main}
Let $f$ and $g$ be given as in Eq. \eqref{expansion}.  Then,

(a) For each $N=0,1,2\dots$,
\begin{align}\label{sum=sum}
\sum_{k=0}^{N} \sum_{\ell=0}^{k} \sum_{j=0}^{\min(k-\ell,\ell)}
	\alpha_{k-\ell}&\beta_{\ell}A_{jk\ell} P_{k-2j}(x)
		\\\nonumber
		= 
		&\sum_{n=0}^{N}
		\left[
			\sum_{m=0}^{\lfloor\frac{N-n}{2}\rfloor} \sum_{\ell=m}^{n+m}
			\alpha_{n+2m-\ell}\beta_\ell A_{m,n+2m,\ell}
		\right]
		P_n(x).
		\end{align}

(b) Assuming pointwise absolute convergence in Eq. \eqref{expansion}, 
\begin{equation}	\label{change-var}
		(f\cdot g)(x)
		=
		\lim_{N\to\infty} \sum_{n=0}^{N}
		\left[
			\sum_{m=0}^{\lfloor\frac{N-n}{2}\rfloor} \sum_{\ell=m}^{n+m}
			\alpha_{n+2m-\ell}\beta_\ell A_{m,n+2m,\ell}
		\right]
		P_n(x),
	\end{equation}
	for each $x\in[-1,1]$.
	
(c) If the series in Eq.\eqref{expansion} converge uniformly and $f$ or $g$ is bounded on $[-1,1]$, then,
\begin{equation}	\label{change-var2}
		(f\cdot g)(x)
		=
		\lim_{N\to\infty} \sum_{n=0}^{2N}
		\left[
			\sum_{m=0}^{\lfloor\frac{2N-n}{2}\rfloor} \sum_{\ell=m}^{n+m}
			\alpha_{n+2m-\ell}^N\beta_\ell^N A_{m,n+2m,\ell}
		\right]
		P_n(x),
	\end{equation}
	and the convergence is uniform.
	\end{theorem}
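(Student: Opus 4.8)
The plan is to establish (a) as a purely finite combinatorial reindexing — no analysis is required — and then to obtain (b) and (c) by passing to the limit and invoking the convergence statements already proved. For (a), I would first apply Lemma \ref{chi} inside each fixed value of $k$, exactly as in the proof of Corollary \ref{cor1}, to rewrite the inner double sum $\sum_{\ell=0}^{k}\sum_{j=0}^{\min(k-\ell,\ell)}$ as $\sum_{j=0}^{\lfloor k/2\rfloor}\sum_{\ell=j}^{k-j}$; thus the left side of \eqref{sum=sum} equals $\sum_{k=0}^{N}\sum_{j=0}^{\lfloor k/2\rfloor}\sum_{\ell=j}^{k-j}\alpha_{k-\ell}\beta_{\ell}A_{jk\ell}P_{k-2j}(x)$. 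Since every Legendre polynomial appearing is indexed by $n:=k-2j$, the next step is to collect terms according to $n$. The map $(k,j)\mapsto(n,m):=(k-2j,\,j)$ is a bijection from $\{(k,j):0\le k\le N,\ 0\le j\le\lfloor k/2\rfloor\}$ onto $\{(n,m):0\le n\le N,\ 0\le m\le\lfloor (N-n)/2\rfloor\}$, with inverse $(k,j)=(n+2m,\,m)$; under it the inner range $j\le\ell\le k-j$ becomes $m\le\ell\le n+m$, and the summand becomes $\alpha_{n+2m-\ell}\beta_{\ell}A_{m,n+2m,\ell}P_n(x)$. Gathering the coefficient of each $P_n(x)$ then produces the right side of \eqref{sum=sum}, and since all sums here are finite the reordering needs no further justification. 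The one point that genuinely requires verification is that this substitution sends the index set bijectively onto the one on the right: $k=n+2m\le N$ is equivalent to $m\le(N-n)/2$, hence, $m$ being an integer, to $m\le\lfloor(N-n)/2\rfloor$; and $j=m\le\lfloor k/2\rfloor$ is equivalent to $n=k-2m\ge 0$. This bookkeeping is elementary, but it is the crux of (a).

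For (b), I would let $N\to\infty$ in the identity \eqref{sum=sum}. Under pointwise absolute convergence of the series in \eqref{expansion}, the left side of \eqref{sum=sum} is exactly the $k$-partial sum of the triple series in \eqref{square-finite} — equivalently, after the reindexing above, the partial sum appearing in \eqref{reorder-sums} of Corollary \ref{cor1} — and therefore converges pointwise to $(f\cdot g)(x)$ for each $x\in[-1,1]$. Hence the right side of \eqref{sum=sum} converges to the same limit, which is precisely the assertion \eqref{change-var}.

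For (c), I would run the identity of (a) verbatim but with $\alpha_n,\beta_n$ replaced throughout by the truncated sequences $\alpha_n^N,\beta_n^N$ of \eqref{alphabeta}, and with the outer index running to $2N$ in place of $N$. This is legitimate because the reindexing argument used nothing at all about the coefficients, and because (as already noted in passing from \eqref{product2} to \eqref{square-finite2}) the doubly-truncated product is a polynomial of Legendre degree at most $2N$, so the tail $k>2N$ contributes nothing. The resulting finite identity has the partial sum of \eqref{square-finite2} on its left and the bracketed double sum of \eqref{change-var2} on its right; note that the bound on $m$ becomes $\lfloor(2N-n)/2\rfloor$, matching \eqref{change-var2}. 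Letting $N\to\infty$, the left side converges uniformly to $(f\cdot g)(x)$ by \eqref{square-finite2} (equivalently \eqref{reorder-sums2}) under the stated uniform-convergence and boundedness hypotheses, so the right side does as well, giving \eqref{change-var2} with uniform convergence. The main obstacle in the whole argument is thus nothing deeper than correctly tracking the index set under the change of variable $n=k-2j$; all of the analytic content is inherited from Corollary \ref{cor1} and Eqs.\ \eqref{square-finite}--\eqref{square-finite2}.
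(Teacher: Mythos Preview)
Your proposal is correct and follows essentially the same route as the paper's proof: both first invoke Lemma~\ref{chi} (as in Corollary~\ref{cor1}) to rewrite the inner double sum, then perform the change of variables $(k,j)\mapsto(n,m)=(k-2j,j)$ to establish (a), and finally deduce (b) and (c) from (a) together with Corollary~\ref{cor1}. The only cosmetic difference is that the paper describes the index-set bijection geometrically, via the image of a triangular region under the linear map $T(k,j)=(k-2j,j)$, whereas you verify it algebraically by the two equivalences $k=n+2m\le N\Leftrightarrow m\le\lfloor(N-n)/2\rfloor$ and $m\le\lfloor k/2\rfloor\Leftrightarrow n\ge0$; these are the same argument in different clothing.
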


\begin{proof}  For Part (a), observe from the proof of Corollary \ref{cor1} that for each $N$,
\begin{equation}\label{parta1}
\sum_{k=0}^{N} \sum_{\ell=0}^{k} \sum_{j=0}^{\min(k-\ell,\ell)}
	\alpha_{k-\ell}\beta_{\ell}A_{jk\ell} P_{k-2j}(x) = 
		\sum_{k=0}^{N} \sum_{j=0}^{\lfloor\frac{k}{2}\rfloor} \sum_{\ell=j}^{k-j}
		\alpha_{k-\ell} \beta_\ell A_{jk\ell} P_{k-2j}(x).		
		\end{equation}
In order to make a change of variables for the first two sums, $\sum\limits_{k=0}^{N}\sum\limits_{j=0}^{\lfloor\frac{k}{2}\rfloor}$, on the right side of Eq. \eqref{parta1}, we define a linear transformation  $T: \mathbb{R}^2 \to \mathbb{R}^2$ by
\begin{equation}	\label{transform}
(n,m)=T(k,j)=(k-2j,j).
\end{equation}
Note that, in this expression, the variables $j, k, n, m$ are real numbers, but below we will restrict them to take integer values. Since $T$ is a linear homeomorphism, it maps line segments to line segments and boundaries and interiors respectively of closed sets to boundaries and interiors respectively of closed sets, in the plane.

The the first two sums $\sum\limits_{k=0}^{N}\sum\limits_{j=0}^{\lfloor\frac{k}{2}\rfloor}$ on the right side of Eq. \eqref{parta1} can be expressed as a finite sum of the points $(k,j)$ in the triangular region in $\mathbb{R}^2\cap\mathbb{Z}^2$
whose boundary in $\mathbb{R}^2$ is the triangle consisting of the union of the following three line segments:
\begin{equation*}
	\{(k,0) \mid 0\leq k\leq N\},
	\,\,
	\left \{
		(k,j) \mathrel{}\middle|\mathrel{}
		0\leq k\leq N, j=\frac{k}{2}
	\right \},
	\,\,
	\left \{
		(N,j) \mathrel{}\middle|\mathrel{}
		0\leq j\leq \frac{N}{2}
	\right \}.
\end{equation*}
Under the transformation $T$ of (\ref{transform}), the above triangular region is mapped to the region whose boundary is the triangle consisting of the union of the following line segments:
\begin{equation*}
	\{(n,0) \mid 0\leq n\leq N\},\quad
	\left \{
		(0,m) \mathrel{}\middle|\mathrel{}
		0\leq m\leq\frac{N}{2}
	\right \},
	\end{equation*}
	and
	\begin{equation*}
	\left \{
		(n,m) \mathrel{}\middle|\mathrel{}
		0\leq n\leq N, m=\frac{N-n}{2}
	\right \}.
\end{equation*}
Therefore, with the change of variables, $n=k-2j$ and $m=j$, we can make the substitutions,
	\begin{equation*}
		k=n+2j=n+2m,
		\qquad
		k-j=n+j=n+m,
	\end{equation*}
	and find that,
	\begin{align}
		\sum_{k=0}^{N} \sum_{j=0}^{\lfloor\frac{k}{2}\rfloor} \sum_{\ell=j}^{k-j}
		\alpha_{k-\ell}& \beta_\ell A_{jk\ell} P_{k-2j}(x)
		\\
		=&
		 \sum_{n=0}^{N}
		\left[
		\sum_{m=0}^{\lfloor\frac{N-n}{2}\rfloor} \sum_{\ell=m}^{n+m}
		\alpha_{n+2m-\ell} \beta_\ell A_{m,n+2m,\ell} 
		\right]
		P_{n}(x),\nonumber
	\end{align}
thus establishing Part (a). Parts (b) and (c) now follow from Part (a) and Corollary \ref{cor1}.
\end{proof}
We will make use of the following result from Wang \citep{Wang3}. 

\begin{theorem} \label{wangtheorem} (Wang \citep{Wang3}) Suppose that $f,f^{\prime},\dots, f^{(j-1)}$ are absolutely continuous on $[-1,1]$ and $f^{(j)}$ has bounded variation.  Then, using the notation of Eq.\eqref{expansion}, for $n\geq j+1$ and $j\geq1$,

\begin{equation}\label{alphabound2}
|\alpha_n|\leq\frac{\sqrt{2/\pi}||f^{(j)}||}{\sqrt{n-j}\,(n-\frac{1}{2})(n-\frac{3}{2})\cdots(n-\frac{2j-1}{2})},
\end{equation}
where $||f^{(j)}||$ is the total variation of $f^j$ on the interval $[-1,1]$. If $j=0$, then $|\alpha_n|\leq\sqrt{2/\pi}||f||/\sqrt{n}$ for $n\geq1$.
\end{theorem}

We note that in the case that $f^j$ is absolutely continuous, $||f^j||=$\\
$\int_{-1}^1|f^{(j+1)}(x)|dx$. 

\begin{corollary}\label{newbounds} Suppose that $f,f^{\prime},\dots, f^{(j-1)}$ are absolutely continuous on $[-1,1]$ and $f^{(j)}$ has bounded variation.  Then, using the notation of Eq.\eqref{expansion}, for $n\geq j+1$ and $j\geq1$,

\begin{equation}\label{alphabound3}
|\alpha_n|\leq\frac{\sqrt{2/\pi}||f^{(j)}||}{\sqrt{n-j}\,(n-\frac{1}{2})(n-\frac{3}{2})\cdots(n-\frac{2j-1}{2})}\leq\frac{\sqrt{2/\pi}||f^{(j)}||}{\left(n-j\right)^{(2j+1)/2}}.
\end{equation}
\end{corollary}
\begin{proof}
The lemma follows immediately from Theorem \ref{wangtheorem} and the observation that,
\begin{equation}
\sqrt{n-j}\left(n-\frac{1}{2}\right)\left(n-\frac{3}{2}\right)\cdots\left(n-\frac{2j-1}{2}\right)\geq (n-j)^{\frac{2j+1}{2}}.
\end{equation}
\end{proof}

\begin{lemma}\label{l2bound}
Suppose that $f$ and $g$ are absolutely continuous on $[-1,1]$ and $f^\prime$ and $g^\prime$ are of bounded variation.  
Then, the collection
\begin{equation*}
\left\{\sum_{n=0}^{N}
		\left[
			\sum_{m=0}^{\lfloor\frac{N-n}{2}\rfloor} \sum_{\ell=m}^{n+m}
			\alpha_{n+2m-\ell}\beta_\ell A_{m,n+2m,\ell}
		\right]
		P_n(x)\right\}
\end{equation*}
is uniformly bounded in $N$ and $x$.
\end{lemma}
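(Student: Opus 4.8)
The plan is to exploit Theorem \ref{main}(a) to rewrite the $N$-th partial sum in a form where the coefficient of each $P_n(x)$ is visibly controlled. From Part (a), the expression in braces multiplying $P_n(x)$ equals the $P_n$-component of $\sum_{k=0}^{N}\sum_{\ell=0}^{k}\sum_{j=0}^{\min(k-\ell,\ell)}\alpha_{k-\ell}\beta_\ell A_{jk\ell}P_{k-2j}(x)$, which in turn (via Eq.\eqref{productsum}, applied term by term) is the $P_n$-component of $\sum_{k=0}^{N}\sum_{\ell=0}^{k}\alpha_{k-\ell}\beta_\ell P_{k-\ell}(x)P_\ell(x)$. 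But this last double sum is exactly the product of partial sums: $\bigl(\sum_{i=0}^{N}\alpha_i P_i(x)\bigr)\bigl(\sum_{\ell=0}^{N}\beta_\ell P_\ell(x)\bigr)$ truncated appropriately — more precisely, regrouping $\sum_{k=0}^{N}\sum_{\ell=0}^{k}\alpha_{k-\ell}\beta_\ell P_{k-\ell}P_\ell$ collects all products $P_i P_\ell$ with $i+\ell\le N$. So the first step is to identify the bracketed quantity as the coefficients in the Legendre expansion of a concrete polynomial, call it $Q_N(x):=\sum_{i+\ell\le N}\alpha_i\beta_\ell P_i(x)P_\ell(x)$.

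The second step is to bound $Q_N$ uniformly. Write $Q_N(x)=S_N^f(x)S_N^g(x)-R_N(x)$, where $S_N^f=\sum_{i=0}^N\alpha_i P_i$, $S_N^g=\sum_{\ell=0}^N\beta_\ell P_\ell$, and $R_N(x)=\sum_{\substack{i,\ell\le N\\ i+\ell>N}}\alpha_i\beta_\ell P_i(x)P_\ell(x)$ is the correction collecting high-degree cross terms. Since $f$ is absolutely continuous with $f'$ of bounded variation, the partial sums $S_N^f$ converge uniformly to $f$ on $[-1,1]$ (by the cited convergence theory, e.g.\ \cite{Saxena}), hence $\sup_N\|S_N^f\|_\infty<\infty$; likewise for $g$. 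Thus $\|S_N^f S_N^g\|_\infty$ is bounded uniformly in $N$. For $R_N$, use $|P_i(x)|\le1$ together with the coefficient decay from Corollary \ref{newbounds} with $j=1$: $|\alpha_i|\le C_f (i-1)^{-3/2}$ for $i\ge2$, and similarly $|\beta_\ell|\le C_g(\ell-1)^{-3/2}$. Then $\|R_N\|_\infty\le\sum_{\substack{i,\ell\le N\\ i+\ell>N}}|\alpha_i||\beta_\ell|$, and since $\sum_i|\alpha_i|$ and $\sum_\ell|\beta_\ell|$ both converge (the series $\sum (i-1)^{-3/2}$ converges), the tail $\sum_{i+\ell>N}|\alpha_i||\beta_\ell|$ is bounded by a fixed constant (indeed it tends to $0$) uniformly in $N$. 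Combining, $\sup_N\|Q_N\|_\infty<\infty$, which is exactly the assertion since the displayed collection is $\{Q_N(x)\}$.

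The main obstacle is the careful bookkeeping in the first step: one must verify that the triple-sum coefficient of $P_n$ produced by Theorem \ref{main}(a) genuinely coincides with the Fourier–Legendre coefficient of the polynomial $Q_N$, i.e.\ that no truncation artifacts from the $\alpha^N,\beta^N$ versus $\alpha,\beta$ distinction enter. Here one should work with the honest (untruncated) coefficients as in Part (a) — which uses $\alpha_{k-\ell},\beta_\ell$ directly, not $\alpha^N,\beta^N$ — and simply observe that in the range $k\le N$ only indices $\le N$ appear, so $Q_N$ is indeed a polynomial of degree at most $N$ whose Legendre coefficients are precisely the bracketed sums. Once that identification is in place, the uniform bound is routine given the decay estimates already established. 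An alternative, entirely self-contained route avoiding any appeal to uniform convergence of $S_N^f$: bound $\|Q_N\|_\infty\le\sum_{i,\ell\ge0}|\alpha_i||\beta_\ell|=\bigl(\sum_i|\alpha_i|\bigr)\bigl(\sum_\ell|\beta_\ell|\bigr)<\infty$ directly, using $|P_i(x)P_\ell(x)|\le1$ and the fact that both coefficient series are absolutely summable by Corollary \ref{newbounds}; this gives the uniform bound in one line and is probably the cleanest presentation.
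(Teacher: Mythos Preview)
Your proposal is correct and the reduction step is identical to the paper's: both use Theorem~\ref{main}(a) together with Eq.~\eqref{productsum} to rewrite the displayed quantity as $\sum_{k=0}^{N}\sum_{\ell=0}^{k}\alpha_{k-\ell}\beta_\ell P_{k-\ell}(x)P_\ell(x)$, and then reduce to showing $\sum_{k}\sum_{\ell=0}^{k}|\alpha_{k-\ell}\beta_\ell|<\infty$ via $|P_n|\le1$ and the $(n-1)^{-3/2}$ decay from Corollary~\ref{newbounds}.

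The only difference is in how that last sum is bounded. The paper splits off the boundary terms $\ell\in\{0,1,2\}$ and $\ell\in\{n-2,n-1,n\}$ and controls the remaining inner sum $\sum_{\ell=3}^{n-3}(n-\ell-1)^{-3/2}(\ell-1)^{-3/2}$ by an explicit convex-integral comparison, obtaining a summable expression in $n$. Your ``alternative route'' is cleaner: since the double sum is dominated by $\bigl(\sum_i|\alpha_i|\bigr)\bigl(\sum_\ell|\beta_\ell|\bigr)$ and each factor is finite by the $3/2$-power decay, the bound is immediate. Your primary route via $Q_N=S_N^fS_N^g-R_N$ also works but is a detour; the one-line factorization is the right presentation.
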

\begin{proof}  From Eqs.\eqref{productsum} and \eqref{sum=sum}, we have,
\begin{align}\label{productsum2}
\sum_{n=0}^{N}\left[
			\sum_{m=0}^{\lfloor\frac{N-n}{2}\rfloor}\sum_{\ell=m}^{n+m}
			\alpha_{n+2m-\ell}\beta_\ell A_{m,n+2m,\ell}
		\right]&P_n(x)\nonumber\\
				=\sum_{n=0}^{N}\sum_{\ell=0}^{n} \sum_{j=0}^{\min(n-\ell,\ell)}
	\alpha_{n-\ell}\beta_{\ell}A_{jn\ell} &P_{n-2j}(x)\\
	=\sum_{n=0}^{N}\sum_{\ell=0}^{n}\alpha_{n-\ell}\beta_{\ell}P_{n-\ell}(x)&P_{\ell}(x).\nonumber
	\end{align}
Thus, it suffices to show that the collection,
\begin{equation*}
\left\{\sum_{n=0}^{N}
\sum_{\ell=0}^{n}|\alpha_{n-\ell}\beta_{\ell}P_{n-\ell}(x)P_{\ell}(x)|\right\}
\end{equation*}
is uniformly bounded in $N$ and in $x$.  Since $|P_n(x)| \leq 1$ for all $n$ and\\ $x\in [-1,1]$ \citep{Canuto} (Eq. 2.3.4), we show that the set of numbers,

\begin{equation*}
\left\{\sum_{n=0}^{N}
\sum_{\ell=0}^{n}|\alpha_{n-\ell}\beta_{\ell}|\right\}
\end{equation*}
is uniformly bounded in $N$.  Using Corollary \ref{newbounds} with $j=1$, we have the following bounds,
\begin{align}
|\alpha_n| &\leq\frac{A_1}{\left(n-1\right)^{3/2}}\label{alphabound}\\ 
|\beta_n| &\leq\frac{B_1}{\left(n-1\right)^{3/2}}\label{betabound},
\end{align}
for $n\geq2$, where $A_1$ and $B_1$ are constants depending on $f$ and $g$ respectively.  For $n\geq 6$, we may write
\begin{equation}\label{splitsum}
\sum_{\ell=0}^{n}|\alpha_{n-\ell}\beta_{\ell}| =\sum_{\ell=0}^{2}|\alpha_{n-\ell}\beta_{\ell}|+\sum_{\ell=0}^{2}|\beta_{n-\ell}\alpha_{\ell}|+\sum_{\ell=3}^{n-3}|\alpha_{n-\ell}\beta_{\ell}|
\end{equation}
and the sum on $n$ of the first two terms on the right side of Eq.\eqref{splitsum} is finite by Eqs.\eqref{alphabound} and \eqref{betabound}. Again, by Eqs.\eqref{alphabound} and \eqref{betabound}, the third term on the right side of Eq.\eqref{splitsum} is bounded as follows,
\begin{align}
\sum_{\ell=3}^{n-3}|\alpha_{n-\ell}\beta_{\ell}| &\leq  A_1B_1 \sum_{\ell=3}^{n-3}\frac{1}{\left(n-\ell-1\right)^{3/2}\left(\ell-1\right)^{3/2}}\label{sumn-3}\\
&< A_1B_1\int_{2}^{n-2}\frac{dx}{(n-x-1)^{3/2} (x-1)^{3/2}}\label{integraln-2}\\
&=A_1B_1\frac{4 (n-4)}{\sqrt{n-3} (n-2)^2},
\end{align}
which is summable on $n$. The integral in Eq.\eqref{integraln-2} is an upper bound for the sum in Eq.\eqref{sumn-3} because the integrand is a convex function with minimum at $n/2$.  Thus, the set
\begin{equation}
\left\{\sum_{n=0}^{N}
\sum_{\ell=0}^{n}|\alpha_{n-\ell}\beta_{\ell}|\right\}
\end{equation}
is uniformly bounded in $N$, and the lemma is proved.
\end{proof}

\begin{theorem}\label{L2}
Suppose that $f$ and $g$ are absolutely continuous on $[-1,1]$ and $f^\prime$ and $g^\prime$ are of bounded variation. Then, the Fourier--Legendre expansion of $f\cdot g$ in $L^2([-1,1])$ is given by,

 \begin{equation}	\label{change-var3}
		(f\cdot g)(x)
		=
		 \sum_{n=0}^{\infty}
		\left[
			\sum_{m=0}^{\infty} \sum_{\ell=m}^{n+m}
			\alpha_{n+2m-\ell}\beta_\ell A_{m,n+2m,\ell}
		\right]
		P_n(x).
\end{equation}
	
Thus, in the notation of Eq.\eqref{fg},
\begin{align}\label{muformula}
\mu_k =\sum_{m=0}^{\infty} \sum_{\ell=m}^{k+m}
			\alpha_{k+2m-\ell}\beta_\ell A_{m,k+2m,\ell}.
\end{align}
\end{theorem}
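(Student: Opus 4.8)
The plan is to read off the Fourier--Legendre coefficients of $f\cdot g$ from the partial sums of Theorem~\ref{main} and then invoke completeness of $\{P_n\}$ in $L^2([-1,1])$. Under the hypotheses, $f$ and $g$ have uniformly (hence absolutely) convergent Fourier--Legendre series and are continuous, hence bounded, so Theorem~\ref{main}, Corollary~\ref{cor1} and Lemma~\ref{l2bound} all apply. Set
\[
S_N(x):=\sum_{n=0}^{N}\Bigl[\sum_{m=0}^{\lfloor\frac{N-n}{2}\rfloor}\sum_{\ell=m}^{n+m}\alpha_{n+2m-\ell}\beta_\ell A_{m,n+2m,\ell}\Bigr]P_n(x)=:\sum_{n=0}^N c_n^{(N)}P_n(x).
\]
By Eq.~\eqref{productsum2} (in the proof of Lemma~\ref{l2bound}), this same $S_N$ equals $\sum_{n=0}^N\sum_{\ell=0}^n\alpha_{n-\ell}\beta_\ell P_{n-\ell}(x)P_\ell(x)$, so $S_N$ is simultaneously a Legendre polynomial expansion of degree $\le N$ (with coefficients $c_n^{(N)}$) and the $N$th partial sum of the Cauchy product of the two series in Eq.~\eqref{expansion}.

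First I would show $S_N\to f\cdot g$ in $L^2([-1,1])$. The proof of Lemma~\ref{l2bound} establishes $\sum_{n=0}^\infty\sum_{\ell=0}^n|\alpha_{n-\ell}\beta_\ell|<\infty$; since $|P_n(x)|\le1$, the Cauchy-product representation of $S_N$ converges uniformly on $[-1,1]$, and by Eq.~\eqref{product1} its limit is $(f\cdot g)(x)$. Hence $S_N\to f\cdot g$ uniformly, in particular in $L^2$. (One could instead start from the uniform convergence in Corollary~\ref{cor1}, but the truncated coefficients $\alpha_n^N,\beta_n^N$ there would have to be removed using the same absolute summability, so routing through Eq.~\eqref{product1} is cleaner.) Consequently, for fixed $k$ and $N\ge k$, orthogonality of the $P_n$ gives $c_k^{(N)}=\frac{2k+1}{2}\int_{-1}^1 S_N(x)P_k(x)\,dx$, and letting $N\to\infty$ yields
\[
\frac{2k+1}{2}\int_{-1}^1 (f\cdot g)(x)P_k(x)\,dx=\lim_{N\to\infty}c_k^{(N)}.
\]

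It remains to identify $\lim_N c_k^{(N)}$. Since $\lfloor(N-k)/2\rfloor\to\infty$, $c_k^{(N)}$ is the $m$-partial sum of the series $\sum_{m=0}^\infty\sum_{\ell=m}^{k+m}\alpha_{k+2m-\ell}\beta_\ell A_{m,k+2m,\ell}$, so it suffices to show this converges absolutely. By Lemma~\ref{Abound}, $0<A_{m,k+2m,\ell}\le1$ for the admissible indices, and $\sum_{\ell=m}^{k+m}|\alpha_{k+2m-\ell}\beta_\ell|\le\sum_{\ell=0}^{k+2m}|\alpha_{(k+2m)-\ell}\beta_\ell|$; summing over $m$, these terms are dominated by $\sum_{p=0}^\infty\sum_{\ell=0}^p|\alpha_{p-\ell}\beta_\ell|<\infty$. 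Thus $\lim_N c_k^{(N)}=\sum_{m=0}^\infty\sum_{\ell=m}^{k+m}\alpha_{k+2m-\ell}\beta_\ell A_{m,k+2m,\ell}=:\mu_k$, which is Eq.~\eqref{muformula}; and since $f\cdot g\in L^2([-1,1])$ and $\{P_n\}$ is an orthogonal basis, the identity $\mu_k=\frac{2k+1}{2}\int_{-1}^1(f\cdot g)P_k$ forces $f\cdot g=\sum_{n=0}^\infty\mu_nP_n$ in $L^2$, i.e.\ Eq.~\eqref{change-var3}. The only genuinely delicate point is the double-limit bookkeeping: the truncation parameter $N$ also bounds how far the inner $m$-sum in $c_k^{(N)}$ extends, so one cannot simply push the limit inside; one must first use the absolute summability coming from Lemma~\ref{l2bound} to see that the inner $m$-series converges on its own, and only then identify it with $\lim_N c_k^{(N)}$. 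Everything else is routine Hilbert-space reasoning.
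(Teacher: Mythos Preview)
Your proof is correct and follows essentially the same architecture as the paper's: show that the partial sums $S_N$ (the paper's $\phi_N$) converge to $f\cdot g$ in $L^2$, then extract $\mu_k$ by pairing with $P_k$ and identifying the limit of the inner $m$-partial sums. The only minor variation is that you obtain $L^2$ convergence via uniform convergence (using the absolute summability $\sum_n\sum_\ell|\alpha_{n-\ell}\beta_\ell|<\infty$ established inside the proof of Lemma~\ref{l2bound}), whereas the paper uses pointwise convergence from Theorem~\ref{main}(b) together with the uniform bound of Lemma~\ref{l2bound} and the Dominated Convergence Theorem; your route is slightly more direct but rests on the same ingredients.
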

\begin{proof}
We first note that the inner product on $L^2 ([-1,1])$ given by,
\begin{equation}
\langle \phi, \psi\rangle = \int_{-1}^{1} \phi(x)\psi(x)\,dx
\end{equation}
is continuous in the $L^2 ([-1,1])$ Hilbert space topology. To simplify the notation, we set
 \begin{equation}	\label{phiN}
		\phi_N (x)= 
		 \sum_{n=0}^{N}
		\left[
		\sum_{m=0}^{\lfloor\frac{N-n}{2}\rfloor} \sum_{\ell=m}^{n+m}
		\alpha_{n+2m-\ell} \beta_\ell A_{m,n+2m,\ell} 
		\right]
		P_{n}(x).
	\end{equation}
By Theorem \ref{main}b, $(f\cdot g)(x) = \lim_{N\to\infty} \phi_N (x)$ pointwise, and by Lemma \ref{l2bound} the sequence$\{\phi_N(x)\}$ is uniformly bounded by a constant on $[-1,1]$. Thus, by the Dominated Convergence Theorem $(f\cdot g)(x) = \lim_{N\to\infty} \phi_N (x)$ in $L^2([-1,1])$.  Then, referring to Eq.\eqref{fg}, we find that,
\begin{align}
\frac{2}{2k+1}\mu_k =& \,\langle f\cdot g, P_k\rangle\\
= & \,\langle \lim_{N\to\infty} \phi_N, P_k\rangle\\
= & \,\lim_{N\to\infty}\langle  \phi_N, P_k\rangle\\
= & \,\frac{2}{2k+1}\lim_{N\to\infty} \sum_{m=0}^{\lfloor\frac{N-k}{2}\rfloor} \sum_{\ell=m}^{k+m}
		\alpha_{k+2m-\ell} \beta_\ell A_{m,k+2m,\ell},
\end{align}
where the last step follows from orthogonality of the Legendre Polynomials. Thus, 
\begin{align}\label{muformula2}
\mu_k =& \lim_{N\to\infty} \sum_{m=0}^{\lfloor\frac{N-k}{2}\rfloor} \sum_{\ell=m}^{k+m}
		\alpha_{k+2m-\ell} \beta_\ell A_{m,k+2m,\ell}\\
			=&\sum_{m=0}^{\infty} \sum_{\ell=m}^{k+m}
			\alpha_{k+2m-\ell}\beta_\ell A_{m,k+2m,\ell}~.
\end{align}
\end{proof}
\begin{remark}\label{hypotheses}
We give an example of a function in \ref{sec:appendixB} that fails to satisfy the hypotheses of Theorem \ref{L2}, but nevertheless satisfies its conclusion.
\end{remark}

\section{Rates of Convergence}
\label{sec:conv_rates}

In this section, we collect results on the rate of convergence of the Fourier--Legendre series for $f\cdot g$ and for the rates of convergence of the series in Eq. \eqref{muformula} for the coefficients $\{\mu_k \}$.  For the sake of notational simplicity, we adopt the following conventions,
\begin{align}
A_j=&\sqrt{2/\pi}||f^{(j)}||\\
B_j=&\sqrt{2/\pi}||g^{(j)}||\\
C_j=&\sqrt{2/\pi}||(fg)^{(j)}||
\end{align}
for $j=1,2,3\dots$ where, on the right hand side, we are using the notation of Theorem \ref{wangtheorem}.

 \begin{theorem} \label{newstone}
Suppose that $f$ and $g$ are absolutely continuous on $[-1,1]$ and $f^\prime$ and $g^\prime$ are of bounded variation.  Then, the following series converges uniformly: 
\begin{equation}	\label{boundthm1}
		(f\cdot g)(x)=\sum_{n=0}^{\infty}\mu_n P_n(x)
		=
		 \sum_{n=0}^{\infty}
		\left[
			\sum_{m=0}^{\infty} \sum_{\ell=m}^{n+m}
			\alpha_{n+2m-\ell}\beta_\ell A_{m,n+2m,\ell}
		\right]
		P_n(x).
	\end{equation}
	Moreover,
\begin{flalign}\label{boundthm2}
\Biggl|(f\cdot g)(x)- \sum_{n=0}^{N}&\left[\sum_{m=0}^{\infty} \sum_{\ell=m}^{n+m} \alpha_{n+2m-\ell}\beta_\ell A_{m,n+2m,\ell}\right]P_n(x)\Biggr| &\\
&\leq C_1\sum_{n=N+1}^{\infty}\frac{1}{\sqrt{n-1}\left(n-\frac{1}{2}\right)}
\end{flalign}
for any $N\geq 1$ and all $x\in[-1,1]$. In particular, for $N\geq 2$ and all $x\in[-1,1]$,
\begin{flalign}\label{boundthm3}
\Biggl|(f\cdot g)(x)- \sum_{n=0}^{N}&\left[\sum_{m=0}^{\infty} \sum_{\ell=m}^{n+m} \alpha_{n+2m-\ell}\beta_\ell A_{m,n+2m,\ell}\right]P_n(x)\Biggr|\nonumber &\\
&\leq C_1\sqrt{2} \left(\pi -2 \tan
   ^{-1}\left(\sqrt{2}
   \sqrt{N-1}\right)\right)\leq\frac{2C_1}{\sqrt{N-1}}.
\end{flalign}
\end{theorem}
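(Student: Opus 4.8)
The plan is to funnel everything through one coefficient estimate for $f\cdot g$ and then carry out standard Weierstrass $M$-test and integral-comparison arguments. First I would check that $f\cdot g$ itself satisfies the hypotheses of Theorem~\ref{wangtheorem} with $j=1$: a product of absolutely continuous functions on the compact interval $[-1,1]$ is absolutely continuous; $(f\cdot g)'=f'g+fg'$ is of bounded variation, being a sum of products of bounded-variation functions (the $f',g'$ are of bounded variation, and $f,g$ are continuous, hence bounded and of bounded variation); and the Wang norm $\|(fg)^{(1)}\|$ is finite because, writing formally $(fg)''=f''g+2f'g'+fg''$, the cross term contributes at most $2\|f'\|_\infty\|g'\|_\infty\int_{-1}^1(1-x^2)^{-1/4}\,dx<\infty$ while the two outer terms contribute at most $\|g\|_\infty\|f^{(1)}\|+\|f\|_\infty\|g^{(1)}\|<\infty$. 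Applying Theorem~\ref{wangtheorem} to $f\cdot g$ then gives
\[
|\mu_n|\le\frac{C_1}{\sqrt{n-\tfrac32}\,\bigl(n-\tfrac12\bigr)}\qquad(n\ge2),
\]
with $C_1=\sqrt{2/\pi}\,\|(fg)^{(1)}\|$ as in the stated conventions.

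Next I would deduce \eqref{boundthm1}. Since $|P_n(x)|\le1$ on $[-1,1]$, the series $\sum_n\mu_nP_n(x)$ is dominated term-by-term by $|\mu_0|+|\mu_1|+C_1\sum_{n\ge2}\bigl(\sqrt{n-\tfrac32}\,(n-\tfrac12)\bigr)^{-1}$, which converges by comparison with $\sum n^{-3/2}$; so by the Weierstrass $M$-test $\sum_n\mu_nP_n(x)$ converges uniformly on $[-1,1]$ to a continuous limit. That limit is the Fourier--Legendre series of $f\cdot g$, hence agrees with $f\cdot g$ in $L^2([-1,1])$ by Theorem~\ref{L2}, hence almost everywhere, hence everywhere by continuity of both sides. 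The identification $\mu_n=\sum_{m}\sum_{\ell}\alpha_{n+2m-\ell}\beta_\ell A_{m,n+2m,\ell}$ is also furnished by Theorem~\ref{L2}, which completes \eqref{boundthm1}.

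For the tail estimates, \eqref{boundthm2} is immediate: when $N\ge1$ every index in the remainder satisfies $n\ge2$, so
\[
\Bigl|(f\cdot g)(x)-\sum_{n=0}^{N}\mu_nP_n(x)\Bigr|=\Bigl|\sum_{n=N+1}^{\infty}\mu_nP_n(x)\Bigr|\le\sum_{n=N+1}^{\infty}|\mu_n|\le C_1\sum_{n=N+1}^{\infty}\frac{1}{\sqrt{n-\tfrac32}\,(n-\tfrac12)}.
\]
For \eqref{boundthm3} I would set $h(x)=\bigl(\sqrt{x-\tfrac32}\,(x-\tfrac12)\bigr)^{-1}$, which is positive and decreasing on $[2,\infty)$, so for $N\ge2$ the tail sum is at most $\int_N^\infty h(x)\,dx$; the substitution $u=\sqrt{x-\tfrac32}$ evaluates this integral to $\pi-2\tan^{-1}\sqrt{N-\tfrac32}$, giving the first inequality. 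For the second, I would invoke the continuous-variable form of Lemma~\ref{induction} with $j=1$, namely $\sqrt{x-\tfrac32}\,(x-\tfrac12)\ge(x-1)^{3/2}$ for $x\ge2$ (after squaring and setting $y=x-1$ this reduces to $\tfrac12y^2-\tfrac14y-\tfrac18\ge0$, which holds for $y\ge1$); hence $h(x)\le(x-1)^{-3/2}$ and $\int_N^\infty h(x)\,dx\le\int_N^\infty(x-1)^{-3/2}\,dx=2/\sqrt{N-1}$.

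The only genuinely non-mechanical step is the first one: confirming that the class ``absolutely continuous, with derivative of bounded variation and finite Wang norm'' is stable under multiplication, so that $C_1<\infty$ and Theorem~\ref{wangtheorem} is legitimately applicable to $f\cdot g$. It is also worth flagging that the final inequality in \eqref{boundthm3} really needs the sharper comparison $h(x)\le(x-1)^{-3/2}$: the naive estimate $\pi-2\tan^{-1}t=2\tan^{-1}(1/t)\le2/t$ with $t=\sqrt{N-\tfrac32}$ would only yield the weaker bound $2/\sqrt{N-3/2}$.
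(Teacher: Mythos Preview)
Your approach is essentially the same as the paper's: verify that $f\cdot g$ inherits the regularity needed for Theorem~\ref{wangtheorem} with $j=1$, apply Wang's bound to $|\mu_n|$, use $|P_n|\le1$ for the tail, and bound the resulting sum by the integral $\int_N^\infty h(x)\,dx$, invoking Lemma~\ref{induction} for the final $2/\sqrt{N-1}$. You supply more detail than the paper does---explicitly evaluating the arctangent integral, explicitly invoking Theorem~\ref{L2} to identify the uniform limit with $f\cdot g$, and attempting to verify $C_1<\infty$---but the skeleton is identical. One small slip: you write ``$f,g$ are continuous, hence \dots of bounded variation,'' which is false in general; what you need (and have) is that $f,g$ are absolutely continuous, hence of bounded variation. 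Your closing remark that the naive arctangent estimate only gives $2/\sqrt{N-3/2}$ is a nice observation.
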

\begin{proof}
Since the product of two absolutely continuous functions is absolutely continuous and products and sums of functions of bounded variation are of bounded variation \citep{royden}, it follows that $f\cdot g$ is absolutely continuous and $(f\cdot g)^\prime$ has bounded variation.  Thus, Theorem \ref{wangtheorem} applies to $f\cdot g$ with $j=1$.  Therefore,
\begin{flalign}
\Biggl|(f\cdot g)(x)- \sum_{n=0}^{N}&\left[\sum_{m=0}^{\infty} \sum_{\ell=m}^{n+m} \alpha_{n+2m-\ell}\beta_\ell A_{m,n+2m,\ell}\right]P_n(x)\Biggr|\nonumber&\\
&\leq \sum_{n=N+1}^{\infty}|\mu_n P_n(x)|\leq \sum_{n=N+1}^{\infty}|\mu_n|\\
&\leq C_1\sum_{n=N+1}^{\infty}\frac{1}{\sqrt{n-1}\left(n-\frac{1}{2}\right)}\nonumber,
\end{flalign}
where in the second line we have used the fact that $|P_n(x)|\leq 1$ for all $x$ and $n$.  Inequality \eqref{boundthm3} follows from direct calculation of the integrals in the following inequalities,
\begin{flalign}
\sum_{n=N+1}^{\infty}\frac{1}{\sqrt{n-1}\left(n-\frac{1}{2}\right)}\leq \int_{N}^{\infty}\frac{dn}{\sqrt{n-1}\left(n-\frac{1}{2}\right)}
\leq \int_{N}^{\infty}\frac{dn}{(n-1)^{\frac{3}{2}}},
\end{flalign}
\end{proof}

 As noted in the proof of Theorem \ref{newstone}, $f\cdot g$ is absolutely continuous and products and sums of functions of bounded variation are of bounded variation. It follows that $f\cdot g$ is absolutely continuous and $(f\cdot g)^\prime$ has bounded variation.  In light of this, the following result is a direct consequence of results in \citep{Wang3}.
 \begin{theorem} \label{newstone2}
Suppose that $f,f^{\prime},\dots, f^{(j-1)}$ and $g,g^{\prime},\dots, g^{(j-1)}$are absolutely continuous on $[-1,1]$ and $f^{(j)}$ and $g^{(j)}$  have bounded variation.  Then, with the notation of Eqs.\eqref{expansion}, \eqref{fg}, and Theorem \ref{wangtheorem}, for $N\geq j+1$ and $j\geq2$, 
\begin{flalign}\label{boundthm2_2}
\Biggl|(f\cdot g)(x)- \sum_{n=0}^{N-1}&\left[\sum_{m=0}^{\infty} \sum_{\ell=m}^{n+m} \alpha_{n+2m-\ell}\beta_\ell A_{m,n+2m,\ell}\right]P_n(x)\Biggr| &\\
&\leq \frac{C_j}{(j-1)\sqrt{N-j}}\prod_{k=2}^{j}\frac{1}{\left(N-\frac{2k-1}{2}\right)}
\end{flalign}
for all $x\in[-1,1]$. 
\end{theorem}

\begin{theorem}  \label{mubound} Suppose that $f,f^{\prime},\dots, f^{(j-1)}$ and $g,g^{\prime},\dots, g^{(j-1)}$are absolutely continuous on $[-1,1]$ and $f^{(j)}$ and $g^{(j)}$  have bounded variation.  Then, with the notation of Eqs.\eqref{expansion} and \eqref{fg}, for $M\geq j+1$ and $j\geq1$,

\begin{flalign}\label{theoremerror'}
\Biggl|\mu_k - \sum_{m=0}^{M}&\sum_{\ell=m}^{k+m}\alpha_{k+2m-\ell}\beta_\ell A_{m,k+2m,\ell}\Biggr|&\\
&\leq A_jB_j\sum_{m=M+1}^{\infty}\int_{m-1}^{k+m+1}\frac{\,dx}{(k+2m-x-j)^{\frac{2j+1}{2}}(x-j)^{\frac{2j+1}{2}}}.\nonumber		
\end{flalign}
\end{theorem}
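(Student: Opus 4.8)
The plan is to read the left-hand side of \eqref{theoremerror'} as the tail of the convergent series for $\mu_k$ furnished by Theorem~\ref{L2}, and then to estimate that tail term by term. By Eq.~\eqref{muformula} the outer series in $\mu_k=\sum_{m=0}^{\infty}\sum_{\ell=m}^{k+m}\alpha_{k+2m-\ell}\beta_\ell A_{m,k+2m,\ell}$ converges, so
\[
\mu_k-\sum_{m=0}^{M}\sum_{\ell=m}^{k+m}\alpha_{k+2m-\ell}\beta_\ell A_{m,k+2m,\ell}
=\sum_{m=M+1}^{\infty}\sum_{\ell=m}^{k+m}\alpha_{k+2m-\ell}\beta_\ell A_{m,k+2m,\ell}.
\]
Applying the triangle inequality together with the bound $|A_{m,k+2m,\ell}|\le 1$ from Lemma~\ref{Abound} --- valid because for $m\le\ell\le k+m$ the triple $(m,\,k+2m,\,\ell)$ meets the hypothesis $0\le m\le\min(k+2m-\ell,\ell)\le\ell\le k+2m$ of that lemma --- bounds the absolute value of the left-hand side of \eqref{theoremerror'} by $\sum_{m=M+1}^{\infty}\sum_{\ell=m}^{k+m}|\alpha_{k+2m-\ell}|\,|\beta_\ell|$.

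Next I would apply Corollary~\ref{newbounds}. As $\ell$ runs from $m$ to $k+m$, the first subscript $k+2m-\ell$ runs over $\{m,\dots,k+m\}$ and the second, $\ell$, runs over the same set; since $M\ge j+1$ we have $m\ge M+1\ge j+2\ge j+1$, so every subscript occurring is $\ge j+1$ and Corollary~\ref{newbounds} gives $|\alpha_{k+2m-\ell}|\le A_j/(k+2m-\ell-j)^{(2j+1)/2}$ and $|\beta_\ell|\le B_j/(\ell-j)^{(2j+1)/2}$. Hence
\[
\sum_{\ell=m}^{k+m}|\alpha_{k+2m-\ell}|\,|\beta_\ell|
\;\le\;A_jB_j\sum_{\ell=m}^{k+m}\frac{1}{(k+2m-\ell-j)^{(2j+1)/2}(\ell-j)^{(2j+1)/2}},
\]
and summing over $m\ge M+1$ reduces the theorem to showing that the inner $\ell$-sum is dominated by $\int_{m-1}^{k+m+1}(k+2m-x-j)^{-(2j+1)/2}(x-j)^{-(2j+1)/2}\,dx$ for each $m\ge M+1$.

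This integral comparison is the step requiring the most care. Write $h(x)=(x-j)^{-(2j+1)/2}(k+2m-x-j)^{-(2j+1)/2}$, which is positive on $(j,\,k+2m-j)$; since $\log h(x)=-\tfrac{2j+1}{2}\log(x-j)-\tfrac{2j+1}{2}\log(k+2m-x-j)$ has positive second derivative, $h$ is log-convex, hence convex, on that interval, with minimum at the midpoint $x=\tfrac{k}{2}+m$ --- the same convexity observation used in the proof of Lemma~\ref{l2bound}. Because $M\ge j+1$ forces $m\ge j+2$, the interval $[m-1,\,k+m+1]$ lies strictly inside $(j,\,k+2m-j)$, so $h$ is finite and convex on a neighborhood of each integer $\ell\in\{m,\dots,k+m\}$; Jensen's inequality then gives $h(\ell)\le\int_{\ell-1/2}^{\ell+1/2}h(x)\,dx$, and summing over $\ell$ and enlarging the outer endpoints (permissible since $h\ge0$) yields $\sum_{\ell=m}^{k+m}h(\ell)\le\int_{m-1}^{k+m+1}h(x)\,dx$. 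Combining this with the two displays above establishes \eqref{theoremerror'}. The main obstacle is just the bookkeeping that keeps the integration interval inside the region where $h$ is convex and integrable --- equivalently, tracking that $M\ge j+1$ implies $m\ge j+2$ throughout the tail; once that is secured, everything else is routine.
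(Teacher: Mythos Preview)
Your proof is correct and follows essentially the same route as the paper: write the left side of \eqref{theoremerror'} as the tail of \eqref{muformula}, invoke Lemma~\ref{Abound} to drop $A_{m,k+2m,\ell}$, apply Corollary~\ref{newbounds} to $|\alpha_{k+2m-\ell}|$ and $|\beta_\ell|$, and then replace the $\ell$-sum by the integral using convexity of the integrand. The only cosmetic difference is that you spell out the sum--integral comparison via Jensen's inequality on unit intervals and then enlarge the endpoints, whereas the paper simply asserts the bound as a ``readily verified'' consequence of convexity; your version is a welcome clarification of exactly that step.
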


\begin{proof}
From Eq.\eqref{alphabound3}, for $j+1\leq m\leq\ell\leq m+k$, we have,
\begin{equation}
|\alpha_{k+2m-\ell}|\leq\frac{A_j}{\left(k+2m-\ell-j\right)^{\frac{2j+1}{2}}},
\end{equation}
and,
\begin{equation}
|\beta_{\ell}|\leq\frac{B_j}{\left(\ell-j\right)^{\frac{2j+1}{2}}}.
\end{equation}
By Lemma \ref{Abound}, for $m\leq\ell\leq k+m$, $|A_{m,k+2m,\ell}|\leq 1$, so

\begin{flalign}
\sum_{\ell=m}^{k+m}
			|\alpha_{k+2m-\ell}\beta_\ell &A_{m,k+2m,\ell}| \leq \sum_{\ell=m}^{k+m} |\alpha_{k+2m-\ell}\beta_\ell|\nonumber\\		
			&\leq \sum_{\ell=m}^{k+m} \frac{A_jB_j}{\left(k+2m-\ell-j\right)^{\frac{2j+1}{2}}\left(\ell-j\right)^{\frac{2j+1}{2}}}\\
			& \leq \int_{m-1}^{k+m+1}\frac{A_jB_j\,dx}{(k+2m-x-j)^{\frac{2j+1}{2}} (x-j)^{\frac{2j+1}{2}}}\nonumber,
\end{flalign}
where in the last step, we have used the readily verified fact that the integrand is a convex function. Thus, from Eq.\eqref{muformula},

\begin{flalign}
\left|\mu_k - \sum_{m=0}^{M}\sum_{\ell=m}^{k+m}\alpha_{k+2m-\ell}\beta_\ell A_{m,k+2m,\ell}\right|
&\leq \sum_{m=M+1}^{\infty}\sum_{\ell=m}^{k+m} \left|\alpha_{k+2m-\ell}\beta_{\ell} A_{m,k+2m,\ell}\right|\\
\leq \sum_{m=M+1}^{\infty}\int_{m-1}^{k+m+1}&\frac{A_jB_j\,dx}{(k+2m-x-j)^{\frac{2j+1}{2}}(x-j)^{\frac{2j+1}{2}}}.\nonumber		
\end{flalign}
\end{proof}

\begin{corollary}\label{coeffcor} Suppose that $f$ and $g$ are absolutely continuous on $[-1,1]$ with derivatives of bounded variation.  Then, in the notation of  Eqs.\eqref{expansion}, \eqref{fg}, for $M\geq 3$,
\begin{flalign}\label{j=1,k=2}
&\frac{1}{A_1B_1}\left|\mu_k - \sum_{m=0}^{M}\sum_{\ell=m}^{k+m}\alpha_{k+2m-\ell}\beta_\ell A_{m,k+2m,\ell}\right|\nonumber&\\	
&\qquad\qquad\leq\frac{4 (k+2)}{(k+2 M-2) (2 (\sqrt{(M-2)
   (k+M)}+M-1)+k)}.&
\end{flalign} 
\end{corollary}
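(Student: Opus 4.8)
The plan is to specialize Theorem \ref{mubound} to the case $j=1$ and then carry out the integral on the right-hand side explicitly. Since $f$ and $g$ are absolutely continuous with derivatives of bounded variation, the hypotheses of Theorem \ref{mubound} hold with $j=1$, so for $M\geq 2$ we have
\begin{flalign*}
\left|\mu_k - \sum_{m=0}^{M}\sum_{\ell=m}^{k+m}\alpha_{k+2m-\ell}\beta_\ell A_{m,k+2m,\ell}\right|
\leq A_1B_1\sum_{m=M+1}^{\infty}\int_{m-1}^{k+m+1}\frac{\,dx}{(k+2m-x-1)^{3/2}(x-1)^{3/2}}.
\end{flalign*}
So the first step is to compute, for fixed $m$, the inner integral $I_m=\int_{m-1}^{k+m+1}\frac{dx}{(k+2m-x-1)^{3/2}(x-1)^{3/2}}$. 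This is a standard antiderivative: with $a=k+2m-1$, the integrand is $\big((a-x)(x-1)\big)^{-3/2}$, whose antiderivative is $\frac{2(2x-a-1)}{(a-1)^2\sqrt{(a-x)(x-1)}}$ (which one checks by differentiation, using that the integrand is symmetric about $x=(a+1)/2$). Evaluating between the limits $x=m-1$ and $x=k+m+1$ gives a closed-form expression in $k$ and $m$.

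The second step is to sum $I_m$ over $m$ from $M+1$ to $\infty$. After the evaluation in step one, $I_m$ should take the shape of a telescoping-type or otherwise explicitly summable expression in $m$ — I expect something like a rational function times a square-root factor whose partial sums collapse. Summing and simplifying should yield exactly the right-hand side of \eqref{j=1,k=2}, namely $\dfrac{4(k+2)}{(k+2M-2)\big(2(\sqrt{(M-2)(k+M)}+M-1)+k\big)}$. It is worth double-checking the endpoint behavior: at $x=m-1$ the factor $(x-1)^{3/2}=(m-2)^{3/2}$ and at $x=k+m+1$ the factor $(k+2m-x-1)^{3/2}=(m-2)^{3/2}$ as well, which is why the constraint $M\geq 3$ appears (so that $m\geq M+1\geq 4$ keeps $m-2\geq 2>0$ and all the radicands are positive); this also matches the $\sqrt{(M-2)(k+M)}$ term in the claimed bound.

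I would present it as: invoke Theorem \ref{mubound} with $j=1$; state the antiderivative of $\big((a-x)(x-1)\big)^{-3/2}$ and note it is verified by differentiation; plug in the limits to get $I_m$ in closed form; observe that $\sum_{m=M+1}^\infty I_m$ telescopes (or is a convergent series with explicitly computable sum) and equals the claimed quantity; divide through by $A_1B_1$. The main obstacle is purely computational: correctly evaluating the antiderivative at the endpoints and then recognizing and summing the resulting series in $m$ to land on the precise algebraic form in \eqref{j=1,k=2}. There is no conceptual difficulty beyond bookkeeping — the only subtlety is tracking the $M\geq 3$ restriction so that every square root in sight has a nonnegative argument and the series genuinely converges.
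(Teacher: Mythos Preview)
Your first step is correct and matches the paper: invoke Theorem \ref{mubound} with $j=1$, and evaluate the inner integral. Your antiderivative is right, and plugging in the endpoints gives
\[
I_m=\int_{m-1}^{k+m+1}\frac{dx}{(k+2m-x-1)^{3/2}(x-1)^{3/2}}
=\frac{4(k+2)}{\sqrt{(m-2)(k+m)}\,(k+2m-2)^2},
\]
exactly as the paper obtains.

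The gap is in your second step. The series $\sum_{m=M+1}^{\infty}I_m$ does \emph{not} telescope, and it has no tidy closed form; your expectation that ``partial sums collapse'' is simply false for this summand. What the paper does instead is bound the sum by an integral,
\[
\sum_{m=M+1}^{\infty}\frac{4(k+2)}{\sqrt{(m-2)(k+m)}\,(k+2m-2)^2}
\;\leq\;
\int_{M}^{\infty}\frac{4(k+2)\,dm}{\sqrt{(m-2)(k+m)}\,(k+2m-2)^2},
\]
using that the summand is a decreasing function of $m$. This integral \emph{does} evaluate in closed form to
\[
\frac{4(k+2)}{(k+2M-2)\bigl(2(\sqrt{(M-2)(k+M)}+M-1)+k\bigr)},
\]
which is the claimed bound. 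The constraint $M\geq 3$ is needed here so that the integrand is finite at the lower limit $m=M$ (note the factor $(m-2)^{-1/2}$), not merely so that the individual $I_m$ with $m\geq M+1$ make sense. So the missing ingredient is the integral comparison for the outer sum; once you insert that, the rest of your outline goes through.
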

\begin{proof}
From Eq.\eqref{theoremerror'} with $j=1$,
\begin{flalign}\label{theoremerror2}
&\left|\mu_k - \sum_{m=0}^{M}\sum_{\ell=m}^{k+m}\alpha_{k+2m-\ell}\beta_\ell A_{m,k+2m,\ell}\right|\nonumber&\\
&\qquad\qquad\leq \sum_{m=M+1}^{\infty}\int_{m-1}^{k+m+1}\frac{A_1B_1\,dx}{(k+2m-x-1)^{\frac{3}{2}}(x-1)^{\frac{3}{2}}}& \\
&\qquad\qquad=4A_1B_1  \sum_{m=M+1}^{\infty}\frac{(k+2)}{\sqrt{(m-2) (k+m)} (k+2 m-2)^2}\nonumber&\\
&\qquad\qquad\leq 4A_1B_1 \int_{M}^{\infty}\frac{(k+2)dm}{\sqrt{(m-2) (k+m)} (k+2 m-2)^2},\nonumber&			
\end{flalign}
where in the last step, we require $M\geq 3$.  This last integral may be evaluated exactly as,
\begin{flalign}\label{theoremerror3}
& \int_{M}^{\infty}\frac{(k+2)dm}{\sqrt{(m-2) (k+m)} (k+2 m-2)^2}\nonumber&\\	
&\qquad\qquad=\frac{ k+2}{(k+2 M-2) (2 (\sqrt{(M-2)
   (k+M)}+M-1)+k)}.&	
\end{flalign} 
The result now follows by combining Eqs.\eqref{theoremerror2} and \eqref{theoremerror3}.
\end{proof}
Below, we plot the right side of Inequality \eqref{j=1,k=2} for $k=2$, which can be viewed as a kind of relative error.

\begin{figure}[H]
    \centering
        \begin{subfigure}[t]{0.48\textwidth}
            \centering
            \includegraphics[width=\textwidth]{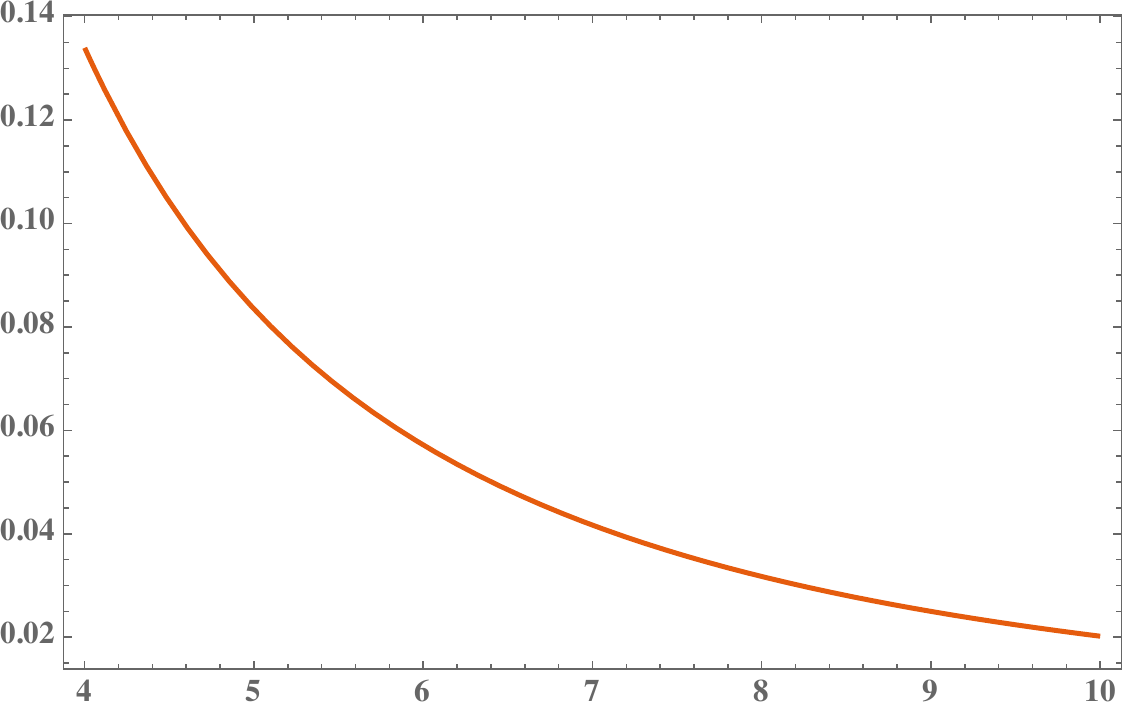}
            \caption{right side of Inequality \eqref{j=1,k=2} for $ \mu_2$ with $j=1$}
            \label{fig:mod150lattip}
        \end{subfigure}
        \hfill
        \begin{subfigure}[t]{0.48\textwidth}
            \centering
            \includegraphics[width=\textwidth]{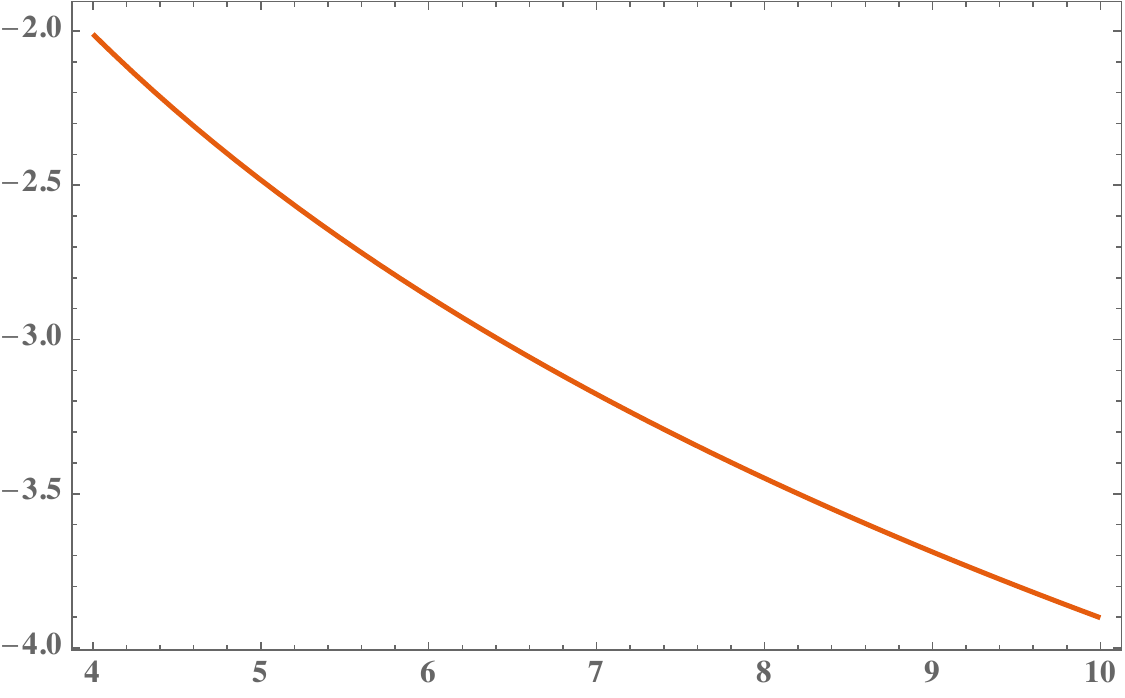}
            \caption{Log of right side of Inequality \eqref{j=1,k=2} for $ \mu_2$ with $j=1$}
            \label{fig:mod250lattip}
        \end{subfigure}
        \caption{
            Error bound according to Corollary \ref{coeffcor} for $\mu_2$ for values of $M$ on the horizontal axes. The vertical axis for the graph on the left is the right side of Inequality \eqref{j=1,k=2}, and the vertical axis on the right is the logarithm of the right side of Inequality \eqref{j=1,k=2}.
        }
        \label{fig:k2j1}
\end{figure}

\begin{corollary}\label{coeffcorj=2} Under the assumptions of Theorem \ref{mubound} with $j=2$.  Then, in the notation of  Eqs.\eqref{expansion}, \eqref{fg}, for $M\geq 4$,
\begin{flalign}\label{j=2,k=2}
&\frac{1}{A_2B_2}\left|\mu_k - \sum_{m=0}^{M}\sum_{\ell=m}^{k+m}\alpha_{k+2m-\ell}\beta_\ell A_{m,k+2m,\ell}\right|\nonumber&\\	
&\quad\leq4\frac{3 (k+2 M-4)^2 \left[(k+2 M-4) \log
   \left(\frac{k+M-1}{M-3}\right)-2 (k+2)\right]+4
   (k+2)^3}{9 (k+2)^2 (k+2 M-4)^3\sqrt{(M-3)(k+M-1)}}.&	
\end{flalign} 
\end{corollary}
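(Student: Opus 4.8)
The plan is to follow the proof of Corollary~\ref{coeffcor}, specialized to $j=2$, so that the exponent $(2j+1)/2$ appearing in Theorem~\ref{mubound} equals $5/2$. First I would apply Theorem~\ref{mubound} with $j=2$ (legitimate since $M\geq 4\geq j+1$) to obtain
\begin{equation*}
\left|\mu_k - \sum_{m=0}^{M}\sum_{\ell=m}^{k+m}\alpha_{k+2m-\ell}\beta_\ell A_{m,k+2m,\ell}\right|
\leq A_2B_2\sum_{m=M+1}^{\infty}\int_{m-1}^{k+m+1}\frac{dx}{(k+2m-x-2)^{5/2}(x-2)^{5/2}}.
\end{equation*}
After dividing by $A_2B_2$, the problem reduces entirely to estimating the double series on the right and matching it to \eqref{j=2,k=2}.

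The key computation is the inner integral. The substitution $x = 2 + \tfrac{c}{2}(1+t)$ with $c = k+2m-4$ converts $\int_{m-1}^{k+m+1}(k+2m-x-2)^{-5/2}(x-2)^{-5/2}\,dx$ into $\tfrac{16}{c^4}\int_{-T}^{T}(1-t^2)^{-5/2}\,dt$ with $T=\tfrac{k+2}{c}$, and using the elementary antiderivative $\int(1-t^2)^{-5/2}\,dt = \tfrac{t(3-2t^2)}{3(1-t^2)^{3/2}}$ together with $c-(k+2)=2(m-3)$ and $c+(k+2)=2(k+m-1)$ this evaluates to an explicit expression $h(m)=\tfrac{4(k+2)\,(3c^2-2(k+2)^2)}{3c^4\,[(m-3)(k+m-1)]^{3/2}}$, positive for $m\geq 4$ and of size $\sim m^{-5}$ as $m\to\infty$. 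Next I would bound $\sum_{m=M+1}^{\infty}h(m)\leq\int_{M}^{\infty}h(m)\,dm$; this is exactly where the hypothesis $M\geq 4$ is used, since it ensures $h(M)$ is finite and positive and $h$ is decreasing on $[M,\infty)$ (a short monotonicity check on $\log h$, analogous to the convexity argument used for Corollary~\ref{coeffcor}). Finally I would evaluate $\int_{M}^{\infty}h(m)\,dm$ in closed form: splitting $3c^2-2(k+2)^2$ gives a rational part, integrating to terms of the type $\tfrac{\text{polynomial in }M}{(k+2M-4)^3\sqrt{(M-3)(k+M-1)}}$, plus a multiple of $\tanh^{-1}\!\big(\tfrac{k+2}{k+2m-4}\big)$; the identity $2\tanh^{-1}\!\big(\tfrac{k+2}{k+2m-4}\big)=\log\!\big(\tfrac{k+m-1}{m-3}\big)$ produces the logarithm in \eqref{j=2,k=2}. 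Collecting the rational and logarithmic contributions over the common denominator $9(k+2)^2(k+2M-4)^3\sqrt{(M-3)(k+M-1)}$ and extracting the overall factor $4$ yields the claimed bound.

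I expect the main obstacle to be purely computational: performing the two nested exact integrations with half-integer exponents correctly and then simplifying the antiderivatives into the compact form on the right-hand side of \eqref{j=2,k=2}. There is genuine cancellation in the final expression — the apparent leading $\log$ term largely cancels the $2(k+2)$ next to it, so the bound in fact decays like $M^{-4}$, consistent with $f,g\in C^2$, just as the $j=1$ bound of Corollary~\ref{coeffcor} decays like $M^{-2}$. No ingredient beyond Theorem~\ref{mubound}, the elementary integral above, and the integral-test comparison already used in Corollary~\ref{coeffcor} is needed; to guard against algebra errors I would verify the final closed form both by its large-$M$ asymptotics and by a direct numerical evaluation of $\big|\mu_k-\sum_{m\le M}\sum_\ell(\cdots)\big|/(A_2B_2)$ at a few values of $k$ and $M$.
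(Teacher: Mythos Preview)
Your plan coincides with the paper's proof through the evaluation of the inner integral: the paper also obtains
\[
\int_{m-1}^{k+m+1}\frac{dx}{(k+2m-x-2)^{5/2}(x-2)^{5/2}}
=\frac{4(k+2)\bigl(k^2+4k(3m-8)+12(m-4)m+40\bigr)}{3\bigl((m-3)(k+m-1)\bigr)^{3/2}(k+2m-4)^4},
\]
which is your $h(m)$ in disguise (your numerator $3c^2-2(k+2)^2$ with $c=k+2m-4$ expands to exactly the quadratic in $m$ above), and then replaces $\sum_{m\geq M+1}$ by $\int_M^\infty$.

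The divergence is at the last step, and your description there is not quite right. The integral $\int_M^\infty h(m)\,dm$ does \emph{not} contain a $\tanh^{-1}$ term: with the substitution $u=(k+2)/(k+2m-4)$ the integrand becomes $\dfrac{16}{3(k+2)^3}\cdot\dfrac{u^3(3-2u^2)}{(1-u^2)^{3/2}}$, whose antiderivative is the purely algebraic expression $(1-u^2)^{-1/2}-(1-u^2)^{1/2}+\tfrac{2}{3}(1-u^2)^{3/2}$. So carrying out your plan exactly yields a closed form with no logarithm, hence not the right-hand side of \eqref{j=2,k=2}. The logarithm in the stated bound comes from an additional inequality the paper inserts before integrating in $m$: it bounds
\[
\bigl((m-3)(k+m-1)\bigr)^{-3/2}\ \le\ \bigl((M-3)(k+M-1)\bigr)^{-1/2}\,\bigl((m-3)(k+m-1)\bigr)^{-1}
\qquad (m\ge M),
\]
which reduces the $m$-integrand to a rational function of $m$; partial fractions of that rational function are what produce $\log\!\bigl(\tfrac{k+M-1}{M-3}\bigr)$ together with the displayed rational pieces. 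Your exact evaluation would in fact give a sharper inequality than \eqref{j=2,k=2} and thus still imply the corollary, but it will not reproduce the stated right-hand side; to match the paper you need that extra bounding step before the final integration.
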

\begin{proof}
From Eq.\eqref{theoremerror'} with $j=2$,
\begin{flalign}\label{theoremerror4}
&\left|\mu_k - \sum_{m=0}^{M}\sum_{\ell=m}^{k+m}\alpha_{k+2m-\ell}\beta_\ell A_{m,k+2m,\ell}\right|\nonumber&\\
&\qquad\qquad\leq \sum_{m=M+1}^{\infty}\int_{m-1}^{k+m+1}\frac{A_2B_2 dx}{(k+2m-x-2)^{\frac{5}{2}}(x-2)^{\frac{5}{2}}}& \\
&\qquad\qquad=4A_2B_2  \sum_{m=M+1}^{\infty}\frac{(k+2) \left(k^2+4 k (3 m-8)+12 (m-4) m+40\right)}{3 ((m-3)
   (k+m-1))^{3/2} (k+2 m-4)^4}\nonumber&\\
&\qquad\qquad\leq 4A_2B_2 \int_{M}^{\infty}\frac{(k+2) \left(k^2+4 k (3 m-8)+12 (m-4) m+40\right)dm}{3 ((m-3)
   (k+m-1))^{3/2} (k+2 m-4)^4},\nonumber&			
\end{flalign}
where in the last step, we require $M\geq 4$.  This last integral may be bounded as follows,
\begin{flalign}\label{theoremerror5}
&\int_{M}^{\infty}\frac{(k+2) \left(k^2+4 k (3 m-8)+12 (m-4) m+40\right)dm}{3 ((m-3)
   (k+m-1))^{3/2} (k+2 m-4)^4}&\\\nonumber	
&\leq\frac{k+2}{3\sqrt{(M-3)(k+M-1)}}\int_{M}^{\infty}\frac{\left(k^2+4 k (3 m-8)+12 (m-4) m+40\right)dm}{(m-3)
   (k+m-1) (k+2 m-4)^4}&\\\nonumber	
   &=\frac{k+2}{3\sqrt{(M-3)(k+M-1)}}\\\nonumber
  & \times\left[-\frac{2}{(k+2)^2 (k+2 M-4)}+\frac{4}{3 (k+2
   M-4)^3}+\frac{\log
   \left(\frac{k+M-1}{M-3}\right)}{(k+2)^3}\right]&\\\nonumber
    &=\frac{k+2}{3\sqrt{(M-3)(k+M-1)}}\\\nonumber
    &\times\left[\frac{3 (k+2 M-4)^2 \left((k+2 M-4) \log
   \left(\frac{k+M-1}{M-3}\right)-2 (k+2)\right)+4
   (k+2)^3}{3 (k+2)^3 (k+2 M-4)^3}\right]&\\\nonumber
   &=\frac{3 (k+2 M-4)^2 \left((k+2 M-4) \log
   \left(\frac{k+M-1}{M-3}\right)-2 (k+2)\right)+4
   (k+2)^3}{9 (k+2)^2 (k+2 M-4)^3\sqrt{(M-3)(k+M-1)}}.&		
\end{flalign} 
The result now follows by combining Eqs.\eqref{theoremerror4} and \eqref{theoremerror5}.
\end{proof}
Below we plot the right side of Inequality \eqref{j=2,k=2} for $k=2$, which can be viewed as a kind of relative error.

\begin{figure}[H]
    \centering
        \begin{subfigure}[t]{0.48\textwidth}
            \centering
            \includegraphics[width=\textwidth]{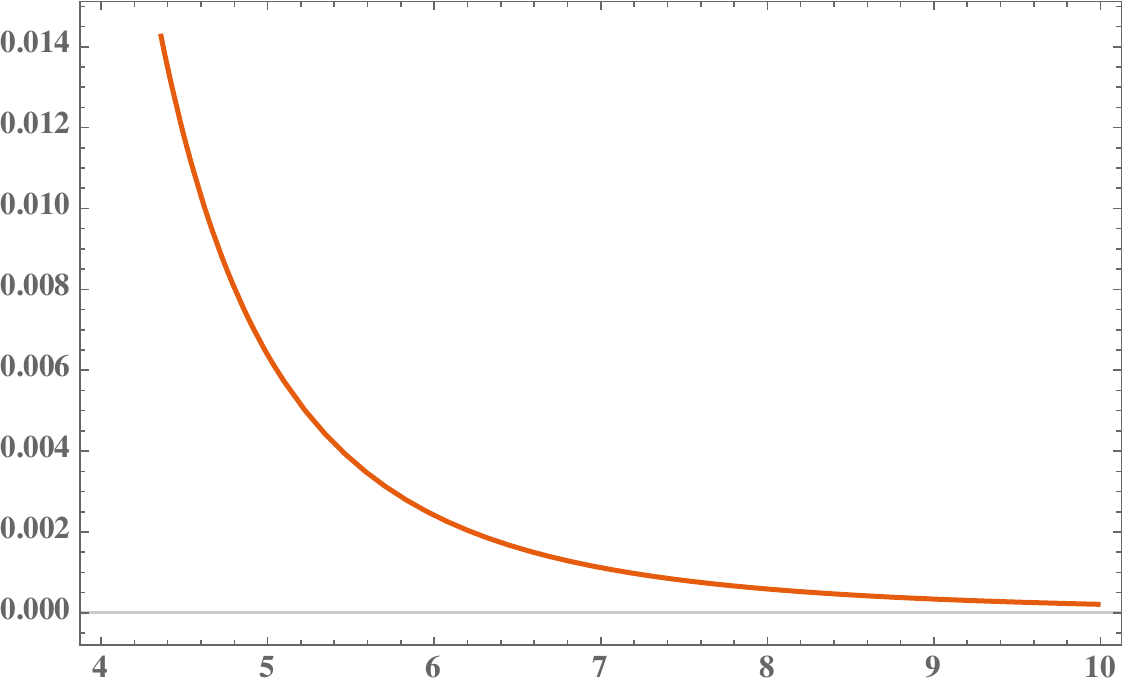}
            \caption{Right side of Inequality \eqref{j=2,k=2} for $ \mu_2$ with $j=2$}
            \label{fig:mod150lattip_2}
        \end{subfigure}
        \hfill
        \begin{subfigure}[t]{0.48\textwidth}
            \centering
            \includegraphics[width=\textwidth]{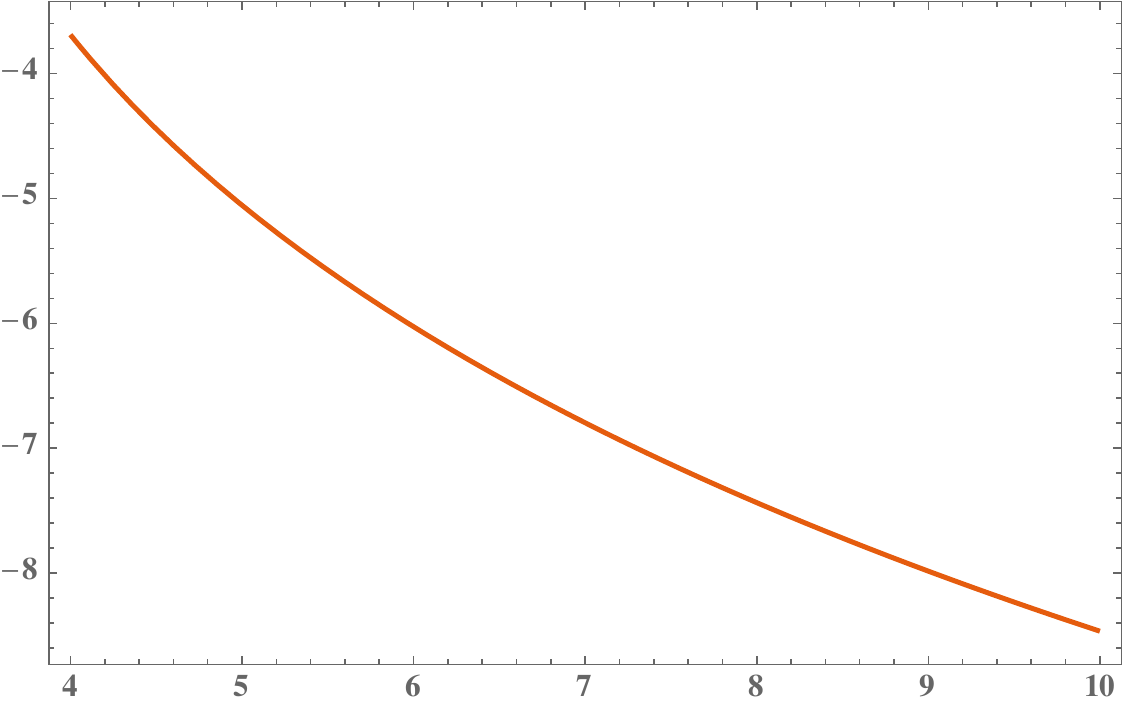}
            \caption{[Log of right side of Inequality \eqref{j=2,k=2}  for $ \mu_2$ with $j=2$}
            \label{fig:mod250lattip_2}
        \end{subfigure}
        \caption{
            Error bound according to Corollary \ref{coeffcorj=2} for $\mu_2$ for values of $M$ on the horizontal axes. The vertical axis for the graph on the left is the right side of Inequality \eqref{j=2,k=2}, and the vertical axis on the right is the logarithm of the right side of Inequality \eqref{j=2,k=2}.
        }
        \label{fig:k2j2}
\end{figure}

\section{Application}
\label{sec:application}

Our goal is to use Legendre polynomial expansions given by Eqs. \eqref{expansion}-\eqref{change-var} to solve semi-analytically a class of nonlinear partial differential equations with polynomial nonlinearity of degree 2.

\subsection{Model Prototype}
\label{sec:model}

Throughout this section, we consider the following class of \emph{nonlinear} initial boundary value problem (IBVP),
\begin{numcases}{(\text{IBVP})\quad}
    \frac{\partial T}{\partial t}
        =
            \frac{\partial}{\partial x}
                \left[
                    (1-x^2)\frac{\partial T}{\partial x}
                \right]
            +c\,T^2+f(x,t)  \label{model_eq}
    \\[1ex]
    \frac{\partial}{\partial x}T(-1,t)
        =
        \frac{\partial}{\partial x}T(1,t)
        =
        0   \label{bound_cond}
    \\[1ex]
    T(x,0)=g(x) \label{gen_init_cond}
\end{numcases}
\noindent
where $f(x,t)$, $g(x)$ are two sufficiently regular functions and $c$ is a given positive constant. The nonlinear term here is the quadratic monomial $T^2$. Note that although the IBVP considers homogeneous Neumann conditions given by \eqref{bound_cond}, the proposed solution methodology can accommodate other boundary conditions including Dirichlet and Robin-type conditions.

\subsection{Solution Methodology}
\label{sec:solution_method}

Our approach here is to use the Legendre expansions given by Eqs. \eqref{expansion} and \eqref{change-var} and reformulate IBVP as initial value problem that incurs a system of ordinary differential equations, which is a more simple problem from a numerical view point. To this end, we assume the functions $f$ and $g$ to admit the Legendre expansions:
\begin{equation}
        f(x,t)
            =
            \sum_{n=0}^{\infty} d_n(t)P_n(x),   
            \qquad
            g(x)
            =
            \sum_{n=0}^{\infty} c_nP_n(x),   
            \label{f_decomp}
\end{equation}
We also assume that the sought-after solution $T$ of IBVP can be represented by the following Legendre series:
\begin{equation}
        T(x,t)
            =
                \sum_{n=0}^{\infty} a_n(t)P_n(x).   \label{T_decomp}   
\end{equation}
Hence, using the truncated Legendre expansion for $T^2$ yields:
\begin{equation}
        \Big(
            T(x,t)
        \Big)^2
            \approx
                \sum_{n=0}^{N} b_n(t) P_n(x),   \label{T2_decomp}
\end{equation}
where the $b_n$ are the coefficients in the Fourier--Legendre expansion given by Eq.\eqref{change-var}.

Consequently, substituting expansions \eqref{f_decomp}-\eqref{T2_decomp} into IVBP allows the determination of the Legendre coefficients $a_n$ by solving the following initial value problem (IVP),
\begin{equation}\label{init_val_prob}
    (IVP) \quad
    \begin{cases}
        a_n'(t)
            &=
                -n(n+1)a_n(t)+c\, b_n(t)+d_n(t)  \\[1ex]
        a_n(0)
            &=
            c_n
    \end{cases}
    \quad n= 0,\cdots, N
\end{equation}
The resulting IVP is a system of nonlinear ODEs. Consequently, as stated earlier, the use of the Legendre expansions for both $T$ and $T^2$ results in a reduction of the numerical complexity to the requirement to solve a system of ODEs instead of a nonlinear PDE problem.  We used the Runge-Kutta method of order 4 to solve the IVP. More specifically, we used the Mathematica\textsuperscript{\tiny\textregistered} software package {\it NDSolve}. Once we numerically determine the coefficients $a_n$, we evaluate $T_{N'}(x,t)$, the partial sum of the series (\ref{T_decomp}) as follows:
\begin{equation}\label{TN_eq}
    T_{N'}(x,t)=\sum_{n=0}^{N'} a_n(t)P_n(x)
\end{equation}
where the integer $N'$ ($N'\leq N$) is chosen to be smallest integer such that the values of $T_{N'}(x,t)$ remain invariant as the values of $N'$ increase.

\subsection{Illustrative Numerical Results}
\label{sec:numeric_results}

We assess in this section the performance efficiency of the proposed solution methodology. Due to space limitations, we present results obtained in the case where the solution $T$ of IBVP is given by: 
\begin{equation}\label{solutionT}
    T(x,t) = e^{x^2-t-2}.
\end{equation}
In this case, the constant $c$ and the functions $f$ and $g$ are set to be:
\begin{equation}\label{solutionT_params}
    \begin{cases}
        \begin{aligned}
            c &= 1 \\
            f(x,t) &= e^{x^2-t-2}(4x^4+2x^2-3)-e^{2x^2-2t-4} \\
            g(x) &= e^{x^2-2}.
        \end{aligned}
    \end{cases}
\end{equation}
Other examples highlighting the salient features of the proposed solution methodology can be found in \citep{Matt}.
The obtained results in the case where $T$ is given by \eqref{solutionT} are reported in Figures \eqref{fig:Coeff_Values}-\eqref{fig:RelError_Trunc} as well as in the Appendix (see Tables \ref{tab:Table_1}-\ref{tab:Table_5}). Note that all numerical experiments reported in this section have been performed using a spatial discretization step $\Delta x = 0.005$ and a time discretization step $\Delta t = 0.01$. The following two observations are noteworthy.
\begin{itemize}
  \item Figure \eqref{fig:Coeff_Values} provides a comparison between the exact Legendre coefficients of $T$ and the computed ones obtained by solving the IVP for Legendre coefficients of low-orders $n = 0$ to $4$, medium-orders $n = 6$ to $10$, and high-orders $n = 20$ to $30$. These results show the curves corresponding to the exact values and the computed ones are undistinguishable at all times $t$. The results reported in Tables \ref{tab:Table_1}-\ref{tab:Table_5} demonstrate that the proposed solution methodology recovers the Legendre coefficient values with an impressive accuracy level. More specifically, as demonstrated by the absolute error values, the computed and the exact coefficients have identical number of digits ranging from 15 to 40 depending on the order of the coefficients and the computational time. Note that the Legendre coefficients tend to  rapidly decrease to zero with respect to both the order $n$ and the time $t$. Moreover, we have not represented the coefficients corresponding to odd values of $n$ as they are all zero since the solution $T$ given by \eqref{solutionT} is an even function with respect of the spatial variable $x$.
 
 \item Figure \eqref{fig:Trunc_Series} depicts a comparison between the exact solution $T$ and the truncated sum $T_{N'}$ given by \eqref{TN_eq} for different values of $N'$ and at different times $t$ represented as multiples of the time step $\Delta t = 0.01$. Note that at $t = 4000 \Delta t$, the solution $T$ reaches its equilibrium which is $0$ and therefore there is no need to go further in time. These results reveal that using only $6$ terms in the truncated sum ($N' = 6$) allows us to retrieve the solution $T$ with $T_{N'}$ at all times with an impressive accuracy level, as reported in Figure \eqref{fig:RelError_Trunc}. Indeed, Figure \eqref{fig:RelError_Trunc} depicts, at each time $t^{m} = m \Delta t$ (m = 100, 500, 1000, 2000, 3000, 4000), the effect of $N'$, the number of terms left in the partial sum given by \eqref{TN_eq}, on the relative error given by: 
 \begin{equation}
\displaystyle\frac{\| T(x,t^m) - T_{N'}(x,t^m) \|_2}{\| T(x,t^m) \|_2} = \displaystyle\frac{\left(\displaystyle\sum_{j = 1}^{M}|T(x_j,t^m) - T_{N'}(x_j,t^m) |^2\right)^{1/2}}{\left(\displaystyle\sum_{j = 1}^{M}|T(x_j,t^m)|^2\right)^{1/2}},\label{relerror}
\end{equation}
where $T$ (resp. $T_{N'}$) is given by \eqref{solutionT} (resp. \eqref{TN_eq}) and $x_j = j \Delta x$, with $\Delta x$ being the spatial step. We point out that such fast convergence of the Fourier--Legendre series to the exact solution with this high accuracy level is consistent with what has been  observed and reported in the literature \citep{Cohen-Tan}. Indeed, Fourier--Legendre polynomial series appear to converge much more rapidly than Taylor expansion. More specifically, it was observed in \citep{Cohen-Tan} that a sixth-order Fourier--Legendre polynomial approximation yields an error at least an order magnitude smaller than that of the analogous Taylor series polynomial.
\end{itemize}

\begin{figure}[H]
    \centering
        \begin{subfigure}[t]{0.48\textwidth}
            \centering
            \includegraphics[width=\textwidth]{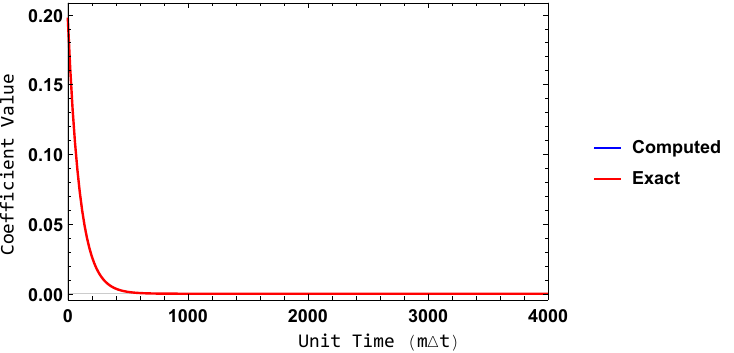}
            \caption{$n=0$}
        \end{subfigure}
        \hfill
        \begin{subfigure}[t]{0.48\textwidth}
            \centering
            \includegraphics[width=\textwidth]{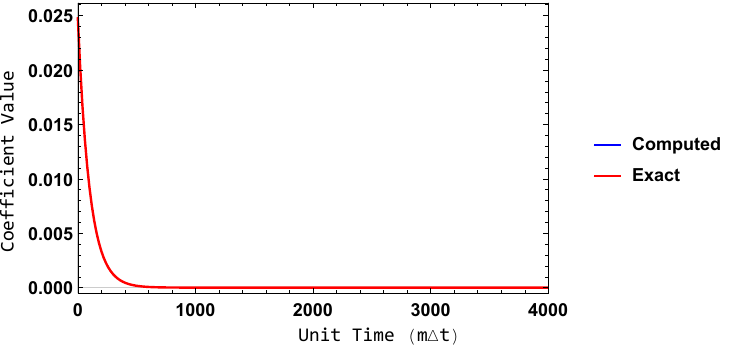}
            \caption{$n=4$}
        \end{subfigure}
        \\
        \begin{subfigure}[t]{0.48\textwidth}
            \centering
            \includegraphics[width=\textwidth]{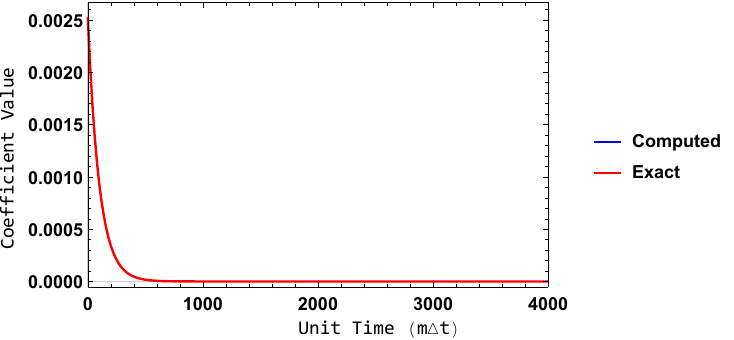}
            \caption{$n=6$}
        \end{subfigure}
        \hfill
        \begin{subfigure}[t]{0.48\textwidth}
            \centering
            \includegraphics[width=\textwidth]{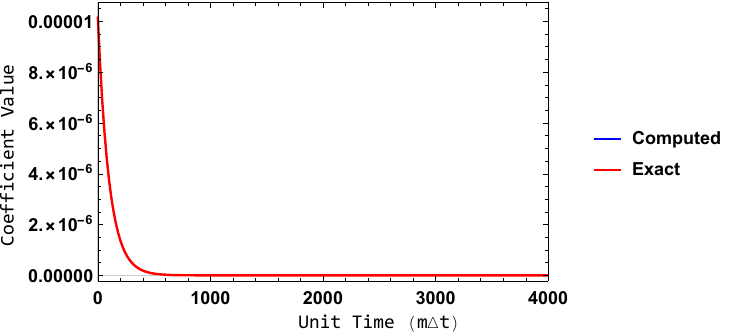}
            \caption{$n=10$}
        \end{subfigure}
        \\
        \begin{subfigure}[t]{0.48\textwidth}
            \centering
            \includegraphics[width=\textwidth]{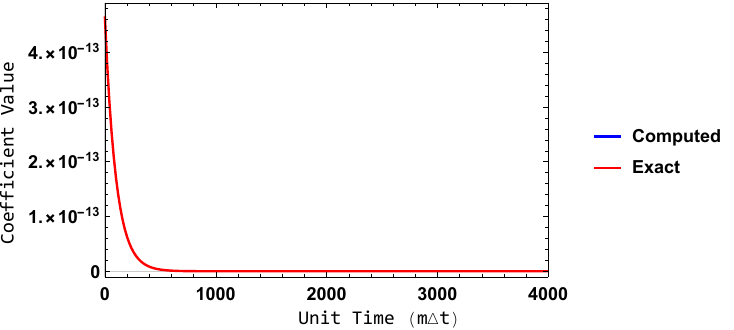}
            \caption{$n=20$}
        \end{subfigure}
        \hfill
        \begin{subfigure}[t]{0.48\textwidth}
            \centering
            \includegraphics[width=\textwidth]{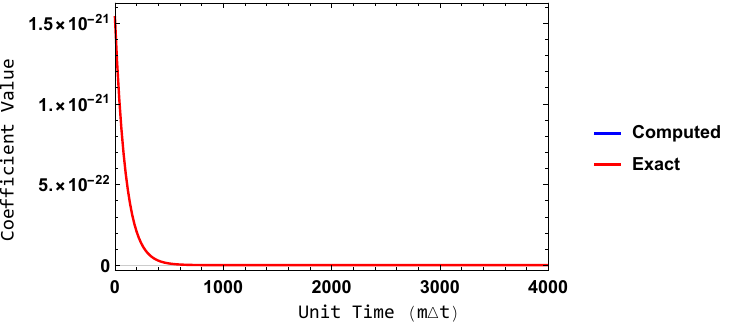}
            \caption{$n=30$}
        \end{subfigure}
    \caption{Legendre coefficient values as functions of time $a_n(t)$: exact vs computed}
    \label{fig:Coeff_Values}
\end{figure}
\begin{figure}[H]
    \centering
        \begin{subfigure}[t]{0.48\textwidth}
            \centering
            \includegraphics[width=\textwidth]{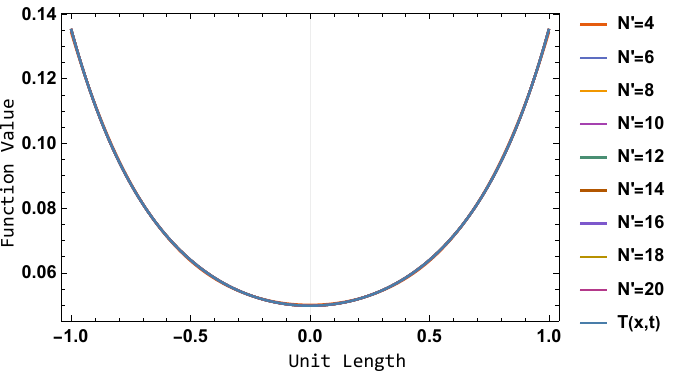}
            \caption{$t=100\Delta t$}
        \end{subfigure}
        \hfill
        \begin{subfigure}[t]{0.48\textwidth}
            \centering
            \includegraphics[width=\textwidth]{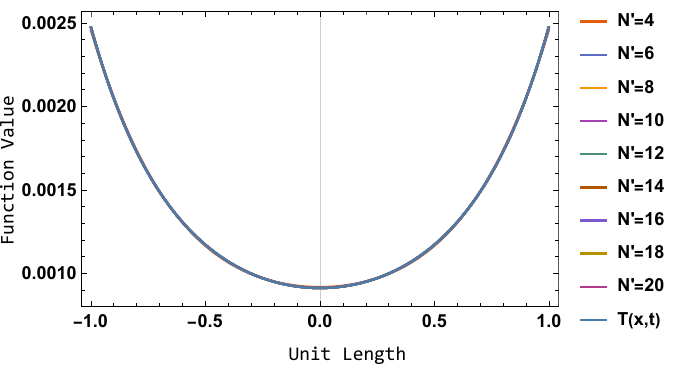}
            \caption{$t=500\Delta t$}
        \end{subfigure}
        \\
        \begin{subfigure}[t]{0.48\textwidth}
            \centering
            \includegraphics[width=\textwidth]{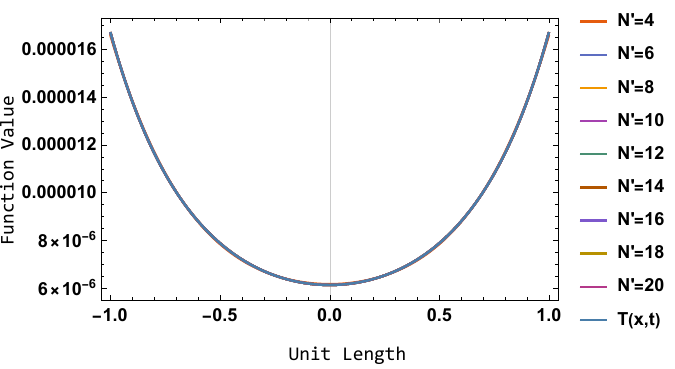}
            \caption{$t=1000\Delta t$}
        \end{subfigure}
        \hfill
        \begin{subfigure}[t]{0.48\textwidth}
            \centering
            \includegraphics[width=\textwidth]{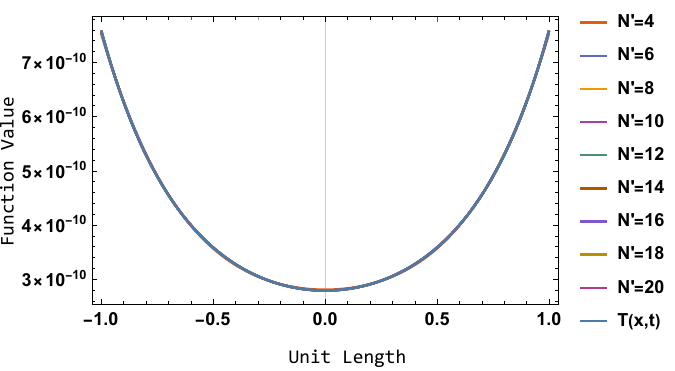}
            \caption{$t=2000\Delta t$}
        \end{subfigure}
        \\
        \begin{subfigure}[t]{0.48\textwidth}
            \centering
            \includegraphics[width=\textwidth]{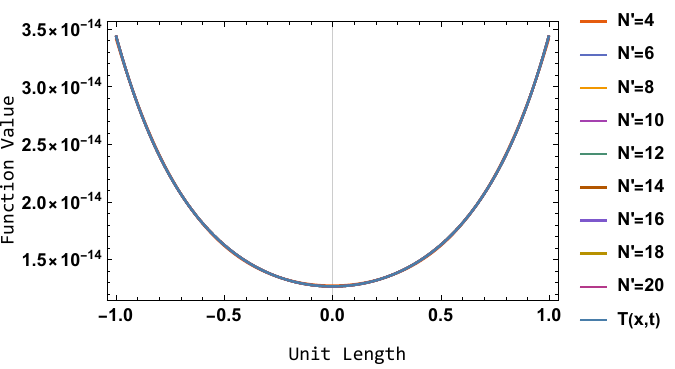}
            \caption{$t=3000\Delta t$}
        \end{subfigure}
        \hfill
        \begin{subfigure}[t]{0.48\textwidth}
            \centering
            \includegraphics[width=\textwidth]{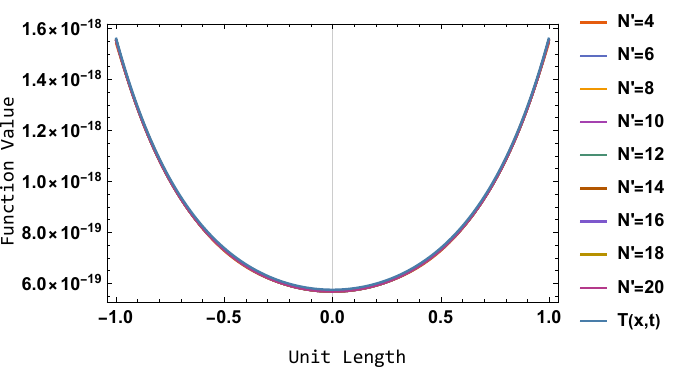}
            \caption{$t=4000\Delta t$}
        \end{subfigure}
    \caption{Exact solution of IBVP given by Eq.\eqref{solutionT} vs. Truncated computed series given by Eq.\eqref{TN_eq}. Sensitivity to the sum truncation $N'$ at various times}
    \label{fig:Trunc_Series}
\end{figure}
\begin{figure}[H]
    \centering
        \begin{subfigure}[t]{0.48\textwidth}
            \centering
            \includegraphics[width=\textwidth]{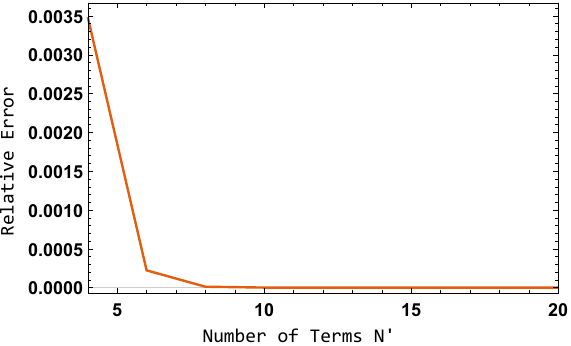}
            \caption{$t=100\Delta t$}
        \end{subfigure}
        \hfill
        \begin{subfigure}[t]{0.48\textwidth}
            \centering
            \includegraphics[width=\textwidth]{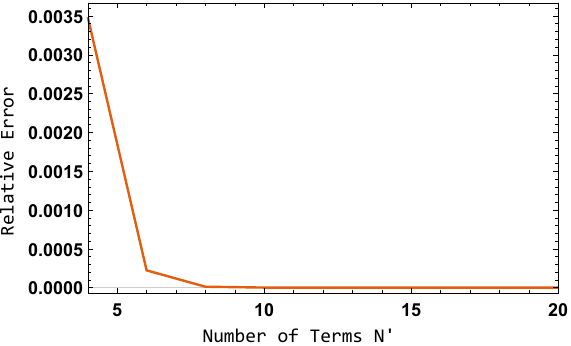}
            \caption{$t=500\Delta t$}
        \end{subfigure}
        \\
        \begin{subfigure}[t]{0.48\textwidth}
            \centering
            \includegraphics[width=\textwidth]{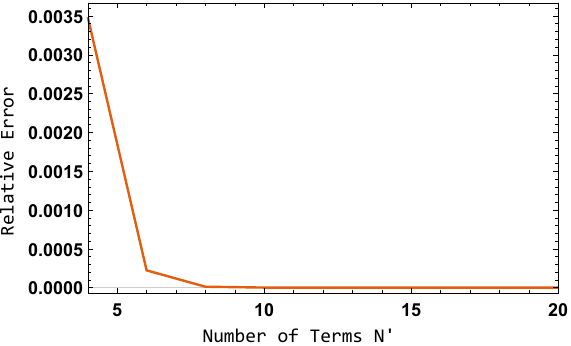}
            \caption{$t=1000\Delta t$}
        \end{subfigure}
        \hfill
        \begin{subfigure}[t]{0.48\textwidth}
            \centering
            \includegraphics[width=\textwidth]{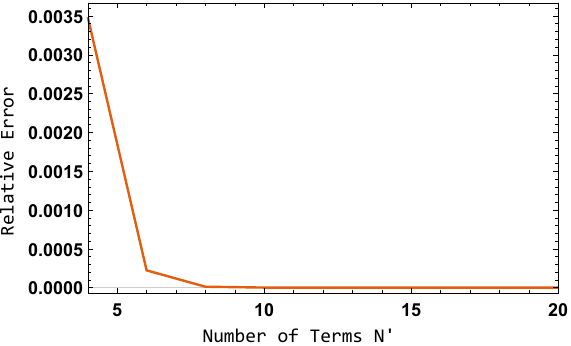}
            \caption{$t=2000\Delta t$}
        \end{subfigure}
        \\
        \begin{subfigure}[t]{0.48\textwidth}
            \centering
            \includegraphics[width=\textwidth]{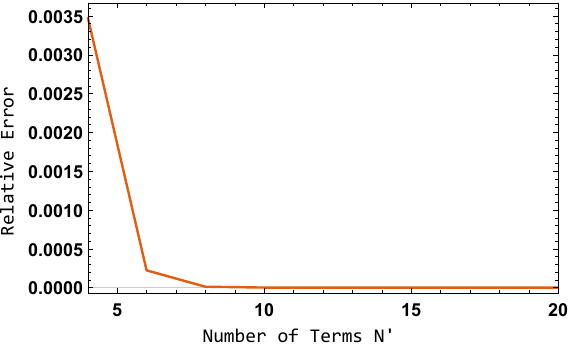}
            \caption{$t=3000\Delta t$}
        \end{subfigure}
        \hfill
        \begin{subfigure}[t]{0.48\textwidth}
            \centering
            \includegraphics[width=\textwidth]{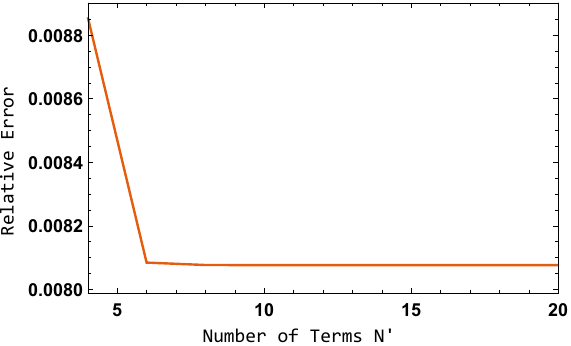}
            \caption{$t=4000\Delta t$}
        \end{subfigure}
    \caption{Sensitivity of the relative error given by Eq.\eqref{relerror} to the truncation number $N'$ at various times.}
    \label{fig:RelError_Trunc}
\end{figure}


\section{Concluding Remarks}
\label{sec:discussion}

With mild restrictions on the functions $f$ and $g$, with respective Fourier--Legendre coefficients $\{\alpha_n\}$ and  $\{\beta_n\}$, we have shown  that the Fourier--Legendre coefficients $\{\mu_k\}$ of $f\cdot g$ are given by Eq.\eqref{muformula}. A bound on the rate of convergence of that series, depending on the smoothness of $f$ and $g$, is given by Theorem \ref{mubound} together with its two corollaries, and bounds on the rate of convergence of the Fourier--Legendre series of $f\cdot g$ are given in Theorems \ref{newstone} and \ref{newstone2}.  Our formulas may be iterated in a straightforward way to determine the Fourier--Legendre coefficients of $f^p(x)$ in terms of those for $f(x)$ for any positive integer $p$.

Our motivation for proving the results of Sections \ref{sec:product} and \ref{sec:conv_rates} was to apply them to finding solutions to PDEs with polynomial nonlinearities.  To that end, we demonstrated in Section \ref{sec:application} that the search for the solution of such a PDE in the form of a Fourier--Legendre series leads to an ordinary differential equation (ODE) system in which the unknowns are the Fourier--Legendre coefficients, just as for the case of linear PDEs. In contrast to the linear case, the resulting ODE in the nonlinear PDE case is also nonlinear.  Difficulties in solving this nonlinear ODE are overcome in large measure by the rapid convergence of the solution series and of the corresponding series for the nonlinear term. We showed that a high degree of accuracy is achieved by solving this system for only a few coefficients, essentially a reduced, or truncated, ODE system. We note that the resulting {\it reduced} ODE system can be solved by any preferred numerical scheme for ODEs. To our knowledge, this is the first use of Fourier-type expansions to solve semi-analytically a nonlinear PDE problem.

This example, along with other equations with non constant coefficients studied by the third listed author of this paper in \citep{Matt}, 
include reference solutions to assess the accuracy of our semi-analytical solution methodology. 
 We anticipate that this approach will be successful in solving PDEs with polynomial nonlinearities such as PDEs arising in global climate models.  In such models, outgoing long wave radiation is often modeled by a linear function of temperature  (the unknown function of position and time), see e.g. \citep{Cord}, but it is more accurately modeled by a multiple of the fourth power of temperature, according to the Stefan-Boltzmann law, and lateral heat diffusion may be effectively modeled by the same term as in the nonlinear PDE we solved in Section \ref{sec:application} \citep{stocker, pier}.\\

\noindent \textbf{Acknowledgement.}  The authors thank Cord Perillo for helpful discussions related to this research. 

\appendix

\section{Exact and Computed Legendre Coefficients Numerical Values}
\label{sec:appendix}

\begin{table}[H]
    \centering
    \caption{Legendre coefficients $a_n(t)$: computed values vs. exact values at $t = 100 \Delta t$.}
    \begin{tabular}{ |c|c|c|c| }
        \hline
        $n$ & Computed Coefficient & Exact Coefficient & Absolute Error \\
        \hline
        0 & \num[round-mode=figures,round-precision=11]{0.072821142471856383992472095690} & \num[round-mode=figures,round-precision=11]{0.0728211424718564109009116524852} & \num[round-mode=figures,round-precision=11]{2.69084395567951031373309419004531717011958e-17} \\
        \hline
        4 & \num[round-mode=figures,round-precision=11]{0.0091378996988565902418344759515} & \num[round-mode=figures,round-precision=11]{0.00913789969885659361820919398437} & \num[round-mode=figures,round-precision=11]{3.376374718032892221722748889831885259765e-18} \\
        \hline
        6 & \num[round-mode=figures,round-precision=11]{0.00093008637272723495154814187501} & \num[round-mode=figures,round-precision=11]{0.000930086372727235295320995114579} & \num[round-mode=figures,round-precision=11]{3.437728532395676606884468649603114327283e-19} \\
        \hline
        10 & \num[round-mode=figures,round-precision=11]{3.7470012092600446191610703530964062247682298686329663e-6} & \num[round-mode=figures,round-precision=11]{3.7470012092600460940112916191864256494993644518576797e-6} & \num[round-mode=figures,round-precision=11]{1.4748502212660900194247311345832247134e-21} \\
        \hline
        20 & \num[round-mode=figures,round-precision=11]{1.71028068936320725252856781631498073739272066e-13} & \num[round-mode=figures,round-precision=11]{1.710280689689486260347003665953595950588504787e-13} & \num[round-mode=figures,round-precision=11]{3.26279007818435849638615213195784127e-23} \\
        \hline
        30 & \num[round-mode=figures,round-precision=11]{5.4017117312571501335483814440855171720615961785598720055e-22} & \num[round-mode=figures,round-precision=11]{5.6758754500932144154069619602992031525586485863295864032e-22} & \num[round-mode=figures,round-precision=11]{2.741637188360642818585805162136859804970524077697143977e-23} \\
        \hline
    \end{tabular}
    \label{tab:Table_1}
\end{table}
\begin{table}[H]
    \centering
    \caption{Legendre coefficients $a_n(t)$: computed values vs. exact values at $t = 500 \Delta t.$}
    \begin{tabular}{ |c|c|c|c| }
        \hline
        $n$ & Computed Coefficient & Exact Coefficient& Absolute Error \\
        \hline
        0 & \num[round-mode=figures,round-precision=11]{0.00133376574897958537283077599380} & \num[round-mode=figures,round-precision=11]{0.00133376574897958557281771399036} & \num[round-mode=figures,round-precision=11]{1.999869379965568818230164638438131314806e-19} \\
        \hline
        4 & \num[round-mode=figures,round-precision=11]{0.000167366471085730156896564607845} & \num[round-mode=figures,round-precision=11]{0.000167366471085730181093757290303} & \num[round-mode=figures,round-precision=11]{2.41971926824583681647896718932741620347e-20} \\
        \hline
        6 & \num[round-mode=figures,round-precision=11]{0.0000170351261382046620151160470347} & \num[round-mode=figures,round-precision=11]{0.0000170351261382046644780193931832} & \num[round-mode=figures,round-precision=11]{2.4629033461484955684339814889145822979e-21} \\
        \hline
        10 & \num[round-mode=figures,round-precision=11]{6.8628721064457290440265333676236926748468677964719e-8} & \num[round-mode=figures,round-precision=11]{6.8628721064457300392522038225083472718193702394965e-8} & \num[round-mode=figures,round-precision=11]{9.952256704548846545969725024430246e-24} \\
        \hline
        20 & \num[round-mode=figures,round-precision=11]{3.132488351061848141072794662776849295349155e-15} & \num[round-mode=figures,round-precision=11]{3.1324883510727869573849759738470035825120988e-15} & \num[round-mode=figures,round-precision=11]{1.09388163121813110701542871629438e-26} \\
        \hline
        30 & \num[round-mode=figures,round-precision=11]{1.038653316178360932735354780883601307085640600088895275e-23} & \num[round-mode=figures,round-precision=11]{1.039572851213388970768062811802427429539980508067860425e-23} & \num[round-mode=figures,round-precision=11]{9.1953503502803803270803091882612245433990797896515e-27} \\
        \hline
    \end{tabular}
    \label{tab:Table_2}
\end{table}
\begin{table}[H]
    \centering
    \caption{Legendre coefficients $a_n(t)$: computed values vs. exact values at $t = 1000 \Delta t$.}
    \begin{tabular}{ |c|c|c|c| }
        \hline
        $n$ & Computed Coefficient & Exact Coefficient & Absolute Error \\
        \hline
        0 & \num[round-mode=figures,round-precision=11]{8.9868429258199637157181371599404437173618146187767105e-6} & \num[round-mode=figures,round-precision=11]{8.9868429258199790087887016990431888051955186219286208e-6} & \num[round-mode=figures,round-precision=11]{1.52930705645391027450878337040031519103e-20} \\
        \hline
        4 & \num[round-mode=figures,round-precision=11]{1.1277064115996192534411090139132910051749139091761537e-6} & \num[round-mode=figures,round-precision=11]{1.1277064115996202717823269774733798449822192044313034e-6} & \num[round-mode=figures,round-precision=11]{1.0183412179635600888398073052952551497e-21} \\
        \hline
        6 & \num[round-mode=figures,round-precision=11]{1.147817770419584173891613758214580526668914949954205e-7} & \num[round-mode=figures,round-precision=11]{1.147817770419585210393338842075040978961401424083087e-7} & \num[round-mode=figures,round-precision=11]{1.036501725083860460452292486474128882e-22} \\
        \hline
        10 & \num[round-mode=figures,round-precision=11]{4.624166851473332327061536545195091459937001609877e-10} & \num[round-mode=figures,round-precision=11]{4.624166851473336502786597735963541038881512001313e-10} & \num[round-mode=figures,round-precision=11]{4.175725061190768449578944510391436e-25} \\
        \hline
        20 & \num[round-mode=figures,round-precision=11]{2.11065404847805523400835970027991620079103e-17} & \num[round-mode=figures,round-precision=11]{2.11065404847810679946908736127745436574419e-17} & \num[round-mode=figures,round-precision=11]{5.156546072766099753816495316e-31} \\
        \hline
        30 & \num[round-mode=figures,round-precision=11]{7.00454502315831265820747744889980051430083032491576e-26} & \num[round-mode=figures,round-precision=11]{7.00458677316397699754123465223983040876293013762895e-26} & \num[round-mode=figures,round-precision=11]{4.175000566433933375720334002989446209981271319e-31} \\
        \hline
    \end{tabular}
    \label{tab:Table_3}
\end{table}
\begin{table}[H]
    \centering
    \caption{Legendre coefficients $a_n(t)$: computed values vs. exact values at $t = 2000 \Delta t$.}
    \begin{tabular}{ |c|c|c|c| }
        \hline
        $n$ & Computed Coefficient & Exact Coefficient & Absolute Error \\
        \hline
        0 & \num[round-mode=figures,round-precision=11]{4.080020376115329765842001471995127339578344553117e-10} & \num[round-mode=figures,round-precision=11]{4.080020376187109078480250032226565059339095705408e-10} & \num[round-mode=figures,round-precision=11]{7.1779312638248560231437719760751152292e-21} \\
        \hline
        4 & \num[round-mode=figures,round-precision=11]{5.11977918793265898417101799879545833229252254524e-11} & \num[round-mode=figures,round-precision=11]{5.11977918793265925901494689145938461842823751679e-11} & \num[round-mode=figures,round-precision=11]{2.7484392889266392628613571497155e-27} \\
        \hline
        6 & \num[round-mode=figures,round-precision=11]{5.2110846157181128232278648583132165958096698227e-12} & \num[round-mode=figures,round-precision=11]{5.211084615718113102959874137807547166582542233e-12} & \num[round-mode=figures,round-precision=11]{2.797320092794943305707728724102e-28} \\
        \hline
        10 & \num[round-mode=figures,round-precision=11]{2.09936850266900186571923592966638457081284217e-14} & \num[round-mode=figures,round-precision=11]{2.09936850266900197841070485349869399324511533e-14} & \num[round-mode=figures,round-precision=11]{1.1269146892383230942243227316e-30} \\
        \hline
        20 & \num[round-mode=figures,round-precision=11]{9.5823545553810340301160223774413513440761440149684696207e-22} & \num[round-mode=figures,round-precision=11]{9.5823545553810345547138244300468813210133843416338289413e-22} & \num[round-mode=figures,round-precision=11]{5.245978020526055299769372403266653593206e-38} \\
        \hline
        30 & \num[round-mode=figures,round-precision=11]{3.18007747430828645434346585078122781145271419001e-30} & \num[round-mode=figures,round-precision=11]{3.18007747516874966850108273035964835398779960823e-30} & \num[round-mode=figures,round-precision=11]{8.6046321415761687957842054253508541822e-40} \\
        \hline
    \end{tabular}
    \label{tab:Table_4}
\end{table}
\begin{table}[H]
    \centering
    \caption{Legendre coefficients $a_n(t)$: computed values vs. exact values at $t = 4000 \Delta t$.}
    \begin{tabular}{ |c|c|c|c| }
        \hline
        $n$ & Computed Coefficient & Exact Coefficient & Absolute Error \\
        \hline
        0 & \num[round-mode=figures,round-precision=11]{8.337769687529857208135134433242288769193e-19} & \num[round-mode=figures,round-precision=11]{8.409548778001187301056047145477344066244e-19} & \num[round-mode=figures,round-precision=11]{7.1779090471330092920912712235055297051e-21} \\
        \hline
        4 & \num[round-mode=figures,round-precision=11]{1.055265141924529476433043608797143689468e-19} & \num[round-mode=figures,round-precision=11]{1.055265141929293856413546705953971003236e-19} & \num[round-mode=figures,round-precision=11]{4.764379980503097156827313769e-31} \\
        \hline
        6 & \num[round-mode=figures,round-precision=11]{1.07408459324720682697383231128328022066e-20} & \num[round-mode=figures,round-precision=11]{1.074084593252122890358746628151136708774304151512694457969e-20} & \num[round-mode=figures,round-precision=11]{4.9160633849143168678564881e-32} \\
        \hline
        10 & \num[round-mode=figures,round-precision=11]{4.327120994089670339886267906403958918742486093711993084e-23} & \num[round-mode=figures,round-precision=11]{4.327120994109624585557016129827564565641410461538962104e-23} & \num[round-mode=figures,round-precision=11]{1.995424567074822342360564689892436782696902e-34} \\
        \hline
        20 & \num[round-mode=figures,round-precision=11]{1.97507048032228528200571893433898454379912313313e-30} & \num[round-mode=figures,round-precision=11]{1.97507048033142355631629972708030419188863871787e-30} & \num[round-mode=figures,round-precision=11]{9.13827431058079274131964808951558475e-42} \\
        \hline
        30 & \num[round-mode=figures,round-precision=11]{6.55462820754898477267247857915629338009e-39} & \num[round-mode=figures,round-precision=11]{6.55462820757933131071572453881670915289e-39} & \num[round-mode=figures,round-precision=11]{3.034653804324595966041577281e-50} \\
        \hline
    \end{tabular}
    \label{tab:Table_5}
\end{table}

\section{Conditions in Theorem \ref{L2} are Sufficient Conditions: An illustrative Example}
\label{sec:appendixB}

In this appendix, we consider the function,
\begin{equation}\label{fdef}
f(x)= \sum_{n=1}^{\infty}\frac{P_n(x)}{n}.
\end{equation}
We analyze some of its properties and calculate Fourier--Legendre coefficients $\mu_k$ for $f^2$  (see Eq.\eqref{muformula}) to illustrate some of our results, and to demonstrate that some of the hypotheses, though sufficient, are not necessary for our results to hold. 
\begin{figure}[H]
\center
\includegraphics[width=.7\textwidth]{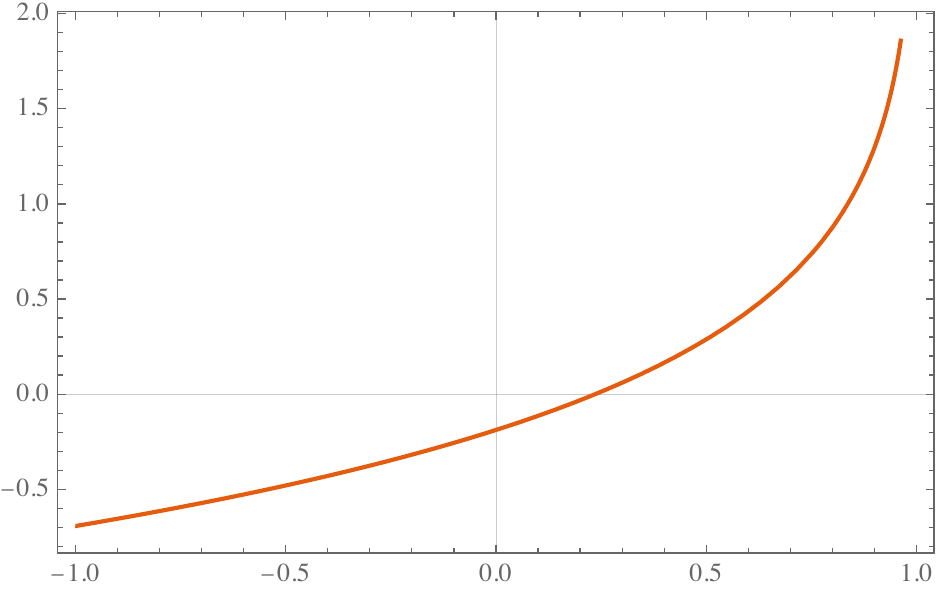}
\caption{Graph of $f(x)= \sum_{n=1}^{1000}\frac{P_n(x)}{n}$}
\label{f1000}
\end{figure}
The series in Eq.\eqref{fdef} is easily shown to converge in $L^2[-1,1]$ by expanding $f$ in the orthonormal basis $\{ \sqrt{n+\frac{1}{2}}\,P_n(x)\}$ and using Parseval's theorem.  
We note that since $P_n(-1)=(-1)^n$, the series defining $f$ converges conditionally at $x=-1$, and since $P_n(1)=1$ for all $n$, it diverges at $x=1$. 

For later purposes, we let,
\begin{equation}\label{fdef}
g(x)= \sum_{n=1}^{\infty}\frac{|P_n(x)|}{n},
\end{equation}
and observe that the sum for $g$ diverges at $x=\pm1$.  According to Bernstein's inequality (see, e.g. \citep{Saxena}),
\begin{equation}
|P_n(x)|\leq\sqrt{\frac{2}{\pi n}}\,\,\frac{1}{(1-x^2)^{\frac{1}{4}}}
\end{equation}
for $x\in[-1,1]$ and $n\geq 1$. It follows that the series in Eq\eqref{fdef} converges absolutely in $(-1,1)$ and,
\begin{align}
|f(x)|\leq g(x) \leq \frac{K}{(1-x^2)^{\frac{1}{4}}}\\
f^2(x) \leq g^2(x)\leq \frac{K^2}{(1-x^2)^{\frac{1}{2}}},\label{integrable}
\end{align}
for some constant $K$. Thus, $f(x)$, $g(x)$, $f^2(x)$, and $g^2(x)$ are integrable on $[-1,1]$.
\begin{figure}[H]
\center
\includegraphics[width=.7\textwidth]{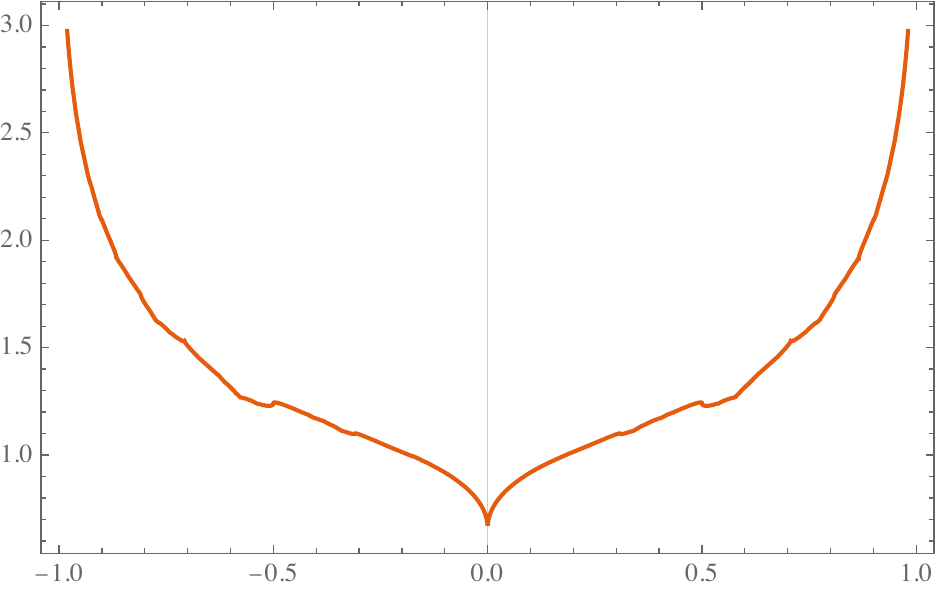}
\caption{Graph of $g(x)= \sum_{n=1}^{1000}\frac{|P_n(x)|}{n}$}
\label{f1000}
\end{figure}
Observe next that $f$ is unbounded, and therefore cannot satisfy the hypotheses of Theorem \ref{L2}.  Indeed, either $f$ is not absolutely continuous on $[-1,1)$ in which case the hypotheses are clearly not satisfied, or $f$ is  absolutely continuous on $[-1,1)$, but its derivative must be an unbounded function by the Fundamental Theorem of Calculus. Therefore, $f'$ cannot have bounded variation.  We consider next whether $f^2$ nevertheless satisfies the conclusion of Theorem \ref{L2}.

To calculate $\mu_k$ for the function $f^2$ from the formula in Eq.\eqref{muformula}, we let $\alpha_k = \beta_k=1/k$ and use Eq.\eqref{ajkl} to find that,
\begin{align}\label{formula2}
\sum_{\ell=m}^{k+m}\alpha_{k+2m-\ell}\beta_\ell A_{m,k+2m,\ell}=\sum_{\ell=m}^{k+m}\frac{A_{m,k+2m,\ell}}{\ell(k+2m-\ell)}= \frac{4 k+2}{m (k+2 m) (2 k+2 m+1)},
\end{align} for any nonnegative integer $k$.  Using Mathematica, we then calculate,
\begin{flalign}\label{muformula3}
\quad\mu_k &=\sum_{m=0}^{\infty} \sum_{\ell=m}^{k+m}\alpha_{k+2m-\ell}\beta_\ell A_{m,k+2m,\ell}\nonumber&\\
&=\sum_{m=1}^{\infty}\frac{4 k+2}{m (k+2 m) (2 k+2 m+1)}+ \sum _{\ell=1}^{k-1} \frac{A_{0,k,\ell}}{\ell (k-\ell)}&\\
&=\frac{(4 k+2) H_{\frac{k}{2}}-2 k H_{k+\frac{1}{2}}}{k (k+1)}\,+\, \sum _{\ell=1}^{k-1} \frac{A_{0,k,\ell}}{\ell (k-\ell)},&\nonumber
\end{flalign}
where we take the finite sum on $\ell$ to be zero for $k\leq1$. In the last line of Eq.\eqref{muformula3}, the $n$th Harmonic number, $H_n$, is given by,
\begin{equation}
H_n = \sum_{k=1}^n \frac{1}{k} =\int_{0}^{1}\frac{1-x^n}{1-x}dx,
\end{equation}
when $n$ is a positive integer, and $H_n$ is defined by the integral expression when $n$ is real.  Eq.\eqref{muformula3} allows us to make exact calculations, for example,
\begin{align}
\mu_0 &=\frac{1}{6} \left(-24+\pi ^2+6 \log (16)\right)\\
\mu_1&=6 \left(\frac{5}{9}-\frac{\log (4)}{3}\right)\\
\mu_2 &= \frac{13}{45}+\frac{4 \log (2)}{3}\\
\mu_3 &=\frac{641}{315}-\frac{4 \log (2)}{3}\\
\mu_{10} &= \frac{139252469}{426786360}+\frac{4 \log (2)}{11}.
\end{align}
These values may compared with direct calculations via,
\begin{align}\label{mudirect}
\mu_k=&\frac{2k+1}{2}\int_{-1}^{1}f^2(x)P_k(x)dx=\frac{2k+1}{2}\int_{-1}^{1}\left(\sum_{n=1}^{\infty}\frac{P_n(x)}{n}\right)^2 P_k(x)dx\nonumber\\ 
&\approx \frac{2k+1}{2}\int_{-1}^{1}\left(\sum_{n=1}^{N}\frac{P_n(x)}{n}\right)^2 P_k(x)dx
\end{align}
for large $N$, where the approximation is justified by the Lebesgue Dominated Convergence Theorem with $g^2$ as the integrable dominating function (see Eq.\eqref{integrable}.  

The following table compares approximations for the first few values of $\mu_k$ via the integral approximation of Eq.\eqref{mudirect}, with $N=500$ (middle column), to decimal approximations to the exact values of $\mu_k$ calculated from Eq.\eqref{muformula3} (right column). The estimates are close for higher values of $k$ as well.
\begin{table}[htp]
\caption{Comparison of integral estimates with exact (roundoff) values of $\mu_k$}  
\begin{center}
\resizebox{\textwidth}{!}{
\begin{tabular}{||c|c|c||}
\hline
k&$\mu_k\approx\frac{2k+1}{2}\int_{-1}^1\left(\sum_{n=1}^{500}\frac{P_n(x)}{n}\right)^2 P_k(x) dx$& $\mu_k =\sum_{m=0}^{\infty} \sum_{\ell=m}^{k+m}\alpha_{k+2m-\ell}\beta_\ell A_{m,k+2m,\ell}$\\
\hline
0&0.417522&0.417523\\
\hline
1&0.560742&0.560745\\
\hline
2&1.213080&1.213085\\
\hline
3&1.110717&1.110724\\
\hline
\end{tabular}
}
\end{center}\label{tableclimsen2}
\end{table}

The figure below displays the graphs of  $f^2(x)\approx\left(\sum_{n=1}^{200}\frac{P_n(x)}{n}\right)^2$ with $\sum_{k=0}^{40}\mu_k P_k(x)$, where $\mu_0, \mu_1, \dots \mu_{40}$ were computed using Eq\eqref{muformula3}.
\begin{figure}[H]
\center
\includegraphics[width=.7\textwidth]{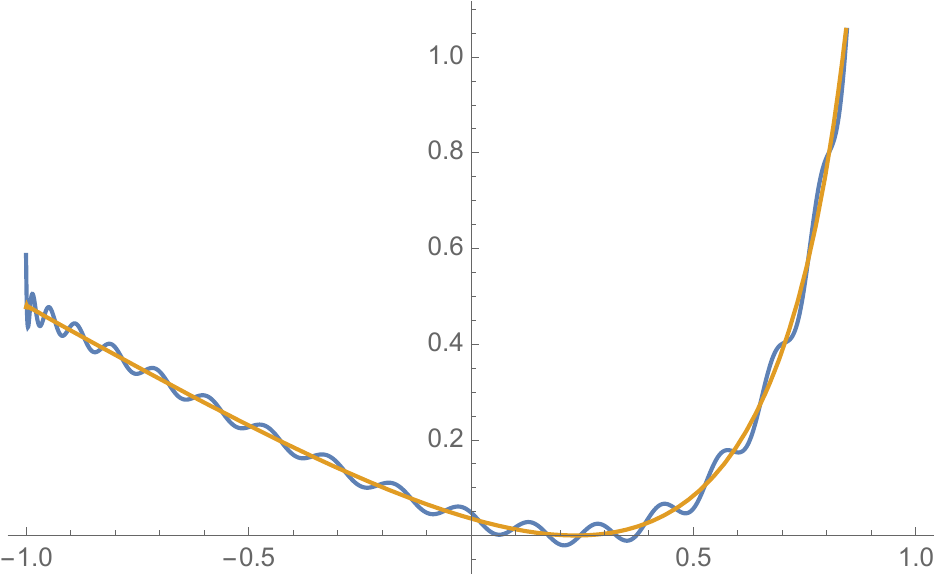}
\caption{Graph of $\left(\sum_{n=1}^{200}\frac{P_n(x)}{n}\right)^2$(orange); and $\sum_{k=0}^{40}\mu_k P_k(x)$ (blue) with $\mu_k$ computed using Eq\eqref{muformula3}.} 
\label{f1000}
\end{figure}
We see then that conclusion of Theorem \ref{L2} applies to our function $f$ even though its hypotheses are not met. We note, however, that the hypotheses to all of our theorems are easily satisfied by solutions to the class of partial differential equations with polynomial nonlinearities considered here, and in other contexts of practical interest.

\bibliographystyle{elsarticle-num-names} 




%

\end{document}